\newcommand{\an}[1]{\langle{#1}\rangle}
\theoremstyle{plain}
    \newtheorem{question}{Question}[section]    
    \newtheorem{lem}[question]   {Lemma}
    \newtheorem{cor}[question]   {Corollary}
    \newtheorem{prop}[question]  {Proposition}
    \newtheorem{thmA}{Theorem}
\theoremstyle{definition}
  \newtheorem{thm}[question]{Theorem}
    \newtheorem{rem}[question]{Remark}
\newcommand{\imm}{\looparrowright}
\newcommand{\be}{\begin{enumerate}}
\newcommand{\ee}{\end{enumerate}}
\newcommand{\R}{\mathbb{R}}
\newcommand{\Z}{\mathbb{Z}}
\newcommand{\Th}{\textup{Th}}
\newcommand{\la}{\langle}
\newcommand{\ra}{\rangle}
\newcommand{\symp}{\operatorname{Sp}_{2}}
\newcommand{\eps}{\varepsilon}
\newcommand{\xra}{\xrightarrow}
\tikzset{curve/.style={settings={#1},to path={(\tikztostart)
    .. controls ($(\tikztostart)!\pv{pos}!(\tikztotarget)!\pv{height}!270:(\tikztotarget)$)
    and ($(\tikztostart)!1-\pv{pos}!(\tikztotarget)!\pv{height}!270:(\tikztotarget)$)
    .. (\tikztotarget)\tikztonodes}},
    settings/.code={\tikzset{quiver/.cd,#1}
        \def\pv##1{\pgfkeysvalueof{/tikz/quiver/##1}}},
    quiver/.cd,pos/.initial=0.35,height/.initial=0}
\tikzset{tail reversed/.code={\pgfsetarrowsstart{tikzcd to}}}
\tikzset{2tail/.code={\pgfsetarrowsstart{Implies[reversed]}}}
\tikzset{2tail reversed/.code={\pgfsetarrowsstart{Implies}}}
\tikzset{no body/.style={/tikz/dash pattern=on 0 off 1mm}}
\newcommand{\remdc}[1]{\begingroup\color{blue}#1\endgroup}
\begin{document}

\title[Immersed not embedded]{Immersed but not embedded homology classes}

\author{Diarmuid Crowley}
\author{Mark Grant}

\address{School of Mathematics and Statistics,
The University of Melbourne,
Parkville, VIC, 3010,
Australia}
\address{Institute of Mathematics,
Fraser Noble Building,
University of Aberdeen,
Aberdeen AB24 3UE,
UK}

\email{dcrowley@unimelb.edu.au}
\email{mark.grant@abdn.ac.uk}

\date{\today}

\keywords{immersion, embedding, (co)homology, Thom space}

\begin{abstract} 
We provide the first 
documented examples of immersions of closed oriented manifolds 
which are not homologous to embeddings, thus answering a question posed by Zhenhua Liu. 
In these examples
we show that for any representing self-transverse immersion the
double points must represent a non-trivial homology class in the source manifold.
We also provide examples of Steenrod representable integral homology classes which are not represented by immersions.

\end{abstract}


\maketitle

\enlargethispage{1\baselineskip}

\section{Introduction} \label{sec:intro}
In this paper we take up the classical question of representing positive codimension integral homology 
classes in an oriented manifold by maps of varying regularity from another oriented manifold.
Throughout, all manifolds are assumed smooth and oriented, (co)homology is 
integral when coefficients are omitted and ``$M$" and ``$N$" will denote closed, connected manifolds.
Let $N$ be an $n$-manifold and $z \in H_{n-k}(N)$ a homology class.
We say that $z$ is {\em Steenrod representable} if there is a closed oriented $(n{-}k)$-manifold $M$
and a map $f \colon M \to N$ such that $f_*[M] = z$, where $M \in H_{n-k}(M)$ is the fundamental class of $M$.
We say that $z$ is {\em immersed} if $f$ can be chosen to be an immersion and {\em embedded} if
$f$ can be chosen to be an embedding.

Given $z \in H_{n-k}(N)$ as above, in various applications one may wish to know whether $z$ is Steenrod representable, immersed or embedded.  There is a good deal of work on these questions, which we review in Section~\ref{ss:bg} below.
However, to our surprise it does not seem that examples distinguishing these three cases have been documented in the literature,
and the purpose of this paper is to give such examples.
Our work was prompted by the following question of Zhenhua Liu on MathOverflow \cite{Liu}:

\begin{question}[Liu]
Does there exist an integral homology class of positive codimension in an orientable manifold, which is immersed but not embedded?
\end{question}
%
%
\noindent
Our first two results, Theorems A and B, show that the answer to Liu's question is ``\emph{yes}".
Our fourth result, Theorem D, shows that there are Steenrod representable classes which
are not immersed.

Let $\symp$ denote the compact Lie group of invertible $2 \times 2$ quaternionic matrices preserving the standard Hermitian form on $\mathbb{H}^2$. Its underlying smooth manifold is the total space
of a principal $S^3$-bundle over $S^7$.  Hence $H_*(\symp) \cong H_*(S^3 \times S^7)$.

\begin{thmA}\label{thm1}
Both generators of $H_7(\symp)$ are immersed but not embedded.
\end{thmA}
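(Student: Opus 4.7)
\bigskip

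\noindent\textbf{Proof proposal.} The plan is to show that the generator $[\sigma] \in H_7(\mathrm{Sp}(2)) \cong \mathbb{Z}$ is represented by an immersion but by no embedding; the same statement for $-[\sigma]$ then follows by reversing orientations.

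For the immersed direction, I would first establish Steenrod representability using the Atiyah--Hirzebruch spectral sequence $E^2_{p,q} = H_p(\mathrm{Sp}(2); \Omega^{SO}_q) \Rightarrow \Omega^{SO}_{p+q}(\mathrm{Sp}(2))$. Because $\Omega^{SO}_1 = \Omega^{SO}_2 = \Omega^{SO}_3 = 0$ and $H_2(\mathrm{Sp}(2)) = 0$ (so the only potentially nontrivial differential leaving $E_5^{7,0}$, namely $d_5\colon E_5^{7,0} \to E_5^{2,4}$, has trivial target), the generator on $E^2_{7,0}$ survives to $E^\infty_{7,0}$ and lifts to a class in $\Omega^{SO}_7(\mathrm{Sp}(2))$. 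This yields a smooth map $f\colon M^7 \to \mathrm{Sp}(2)$ from a closed oriented $7$-manifold with $f_*[M] = [\sigma]$. Next I would upgrade $f$ to an immersion via the Smale--Hirsch theorem: since $\mathrm{Sp}(2)$ is parallelizable, the problem reduces to producing a bundle monomorphism $TM \hookrightarrow \epsilon^{10}$, equivalently, to immersing $M$ into $\mathbb{R}^{10}$; one arranges this by choosing the bordism representative $M$ appropriately (for instance, stably parallelizable).

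For the non-embedding direction, I would detect the $3$-primary part of the classifying element $\tau \in \pi_6(S^3) \cong \mathbb{Z}/12$ of the bundle $S^3 \to \mathrm{Sp}(2) \to S^7$ via the mod-$3$ Steenrod power $P^1$. The central computation is
\[
P^1 y_3 = -y_7 \in H^7(\mathrm{Sp}(2); \mathbb{F}_3),
\]
where $H^*(\mathrm{Sp}(2); \mathbb{F}_3) = \Lambda(y_3, y_7)$. I would derive this from the Serre spectral sequence of the universal fibration $\mathrm{Sp}(2) \to E\mathrm{Sp}(2) \to B\mathrm{Sp}(2)$: the transgressions are $y_3 \mapsto q_1$ and $y_7 \mapsto q_2$, where $H^*(B\mathrm{Sp}(2); \mathbb{F}_3) = \mathbb{F}_3[q_1, q_2]$; the splitting principle applied to the Chern classes (using $c_1 = c_3 = 0$ for symplectic bundles) gives $P^1 q_1 \equiv 2(q_1^2 + q_2) \pmod 3$; and Kudo's transgression theorem combined with $d_4(q_1 y_3) = q_1^2$ (which kills $q_1^2$ on the $E_5$-page) forces $P^1 y_3 = -y_7$.

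Now suppose for contradiction that $g\colon M^7 \hookrightarrow \mathrm{Sp}(2)$ is an embedding with $g_*[M] = [\sigma]$, with rank-$3$ oriented normal bundle $\nu$. The self-intersection formula gives $g^*(y_3) = e(\nu) \in H^3(M; \mathbb{Z})$. Since $p_*\colon H_7(\mathrm{Sp}(2)) \to H_7(S^7)$ is an isomorphism (visible from the collapsing Serre spectral sequence of $S^3 \to \mathrm{Sp}(2) \to S^7$), the composite $p\circ g$ has degree $\pm 1$, so $g^*(y_7) = \pm [M]^*$ is the fundamental cohomology class of $M$ up to sign. Applying $g^*$ to $P^1 y_3 = -y_7$ and using naturality of $P^1$ yields $P^1 e(\nu) = \mp[M]^* \ne 0$ in $H^7(M; \mathbb{F}_3)$. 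On the other hand, $\nu$ has odd rank, so the fiberwise involution $v \mapsto -v$ reverses its orientation while preserving the underlying bundle; hence $e(\nu) = -e(\nu)$, i.e.\ $2e(\nu) = 0$ in $H^3(M; \mathbb{Z})$, which implies $e(\nu) \equiv 0 \pmod 3$ and thus $P^1 e(\nu) = 0$, a contradiction. The main obstacle I expect is the Steenrod power computation $P^1 y_3 = -y_7$, which requires some care with Kudo's theorem for odd-degree transgressive classes, though one could alternatively invoke published computations of the mod-$3$ Steenrod operations on $H^*(\mathrm{Sp}(n))$.
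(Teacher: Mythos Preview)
Your non-embedding argument is correct and matches closely the paper's \emph{alternative} proof in Section~5 (Theorem~\ref{t:doublesymp}), which also combines Whitney's self-intersection formula with the observation that the Euler class of an odd-rank bundle is $2$-torsion and hence vanishes mod~$3$. The paper's primary non-embedding argument (recalled at the start of Section~\ref{S:proofA}) is the Bohr--Hanke--Kotschick argument via the Thom class: one pulls back along a map to $MSO(3)$ and uses that $H^{10}(MSO(3);\Z_3)=0$. Your approach and theirs are different packagings of the same $P^1_3$-obstruction; both are fine.

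The immersed direction, however, has a genuine gap, and this is precisely the new content of the theorem. You assert that one can choose the bordism representative $M$ to be stably parallelizable, but this is \emph{false}. A stably parallelizable representative would give a class in framed bordism $\Omega^{fr}_7(\symp)\cong\pi_7^S((\symp)_+)$ hitting the generator of $H_7(\symp)$. Using the stable splitting $\Sigma^2\symp\simeq\Sigma^2 K\vee S^{12}$ and the cofibre sequence $S^3\to K\to S^7$, one computes that the image of $\pi_7^S(K)\to H_7(K)\cong\Z$ is the kernel of the map $\Z\to\pi_3^S\cong\Z_{24}$ given by the stable class of $\omega'$, and that stable class is $\pm 2\nu$, of order $12$. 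Thus only $12\Z\subset H_7(\symp)$ is hit by framed bordism, and no stably parallelizable $M$ represents the generator. More generally, ``choose $M$ appropriately so that it immerses in $\R^{10}$'' is exactly the nontrivial step; it does not follow from the AHSS for oriented bordism.

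The paper circumvents this entirely. Rather than producing a manifold $M$, it works directly with the stable-map criterion for immersibility: it shows that the $7$-skeleton $K$ is itself the Thom space of a rank~$3$ oriented bundle over $S^4$ (because $\omega'$ is in the image of $J\colon\pi_3(SO(3))\to\pi_6(S^3)$), so there is a genuine map $K\to MSO(3)$ pulling back the Thom class to $x|_K$. Combined with the stable splitting of the top cell, this yields a stable map $(\symp)_+\rightsquigarrow MSO(3)$ pulling back the Thom class to $x$, which is precisely what ``immersed'' means. If you want to salvage your manifold-level approach, one route is to perform surgery below the middle dimension to make $f$ a $3$-connected map (so that $M$ becomes $2$-connected) and then invoke Wilkens' theorem that $2$-connected $7$-manifolds immerse in $\R^{10}$; the paper does something equivalent in Section~\ref{S:Whitney} when constructing explicit formal immersions.
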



\begin{cor}
For all $n\ge10$ there exists a closed oriented $n$-manifold with an $(n{-}3)$-dimensional homology class which is immersed but not embedded.
\end{cor}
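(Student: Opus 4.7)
The plan is a straightforward product construction that reduces the corollary to \thmref{thm1}. For $n=10$ the statement is immediate, since $\symp$ itself is a closed oriented $10$-manifold carrying a $7$-dimensional class of the required type. For $n>10$, I would take $X := \symp \times S^{n-10}$, a closed oriented $n$-manifold, and consider the cross-product class $w := z \times [S^{n-10}] \in H_{n-3}(X)$, where $z \in H_7(\symp)$ is either of the generators.

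Showing $w$ is immersed is immediate: given an immersion $f \colon M^7 \imm \symp$ representing $z$ (which exists by \thmref{thm1}), the product $f \times \mathrm{id}_{S^{n-10}} \colon M \times S^{n-10} \imm X$ is an immersion, and by the Künneth formula it represents $w$.

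Showing $w$ is not embedded is where the (still short) argument lies. Suppose for contradiction that there is an embedding $g \colon P^{n-3} \hookrightarrow X$ with $g_*[P] = w$. Let $\pi_2 \colon X \to S^{n-10}$ denote projection to the second factor. By Sard's theorem applied to $\pi_2 \circ g$, one can pick a regular value $p \in S^{n-10}$; the preimage $Q := g^{-1}(\symp \times \{p\})$ is then a closed oriented $7$-submanifold of $P$, and $g$ restricts to an embedding $Q \hookrightarrow \symp \times \{p\} \cong \symp$.

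To reach a contradiction with \thmref{thm1}, I need to verify that $[Q] = z$ in $H_7(\symp)$. By Künneth and the compatibility of Poincaré duality with cross products, $\mathrm{PD}_X(w) = \mathrm{PD}_{\symp}(z) \times 1 \in H^3(X)$; restricting along the inclusion $\symp \times \{p\} \hookrightarrow X$ gives $\mathrm{PD}_{\symp}(z) \in H^3(\symp)$. Since transverse intersection corresponds to restriction of Poincaré duals, this is precisely $\mathrm{PD}_\symp[Q]$, so $[Q] = z$. This contradicts the non-embeddability of $z$ established in \thmref{thm1}. The only nontrivial step is this Poincaré-duality bookkeeping, which is entirely standard; no genuine obstacle arises beyond the input of \thmref{thm1}.
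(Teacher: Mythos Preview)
Your proof is correct and follows essentially the same approach as the paper: take the product $\symp\times S^{n-10}$ and the cross-product class $z\times[S^{n-10}]$, realize it by the product immersion, and for non-embeddability intersect a hypothetical embedded representative transversely with a slice $\symp\times\{p\}$ to produce an embedded representative of $z$, contradicting \thmref{thm1}. You spell out the transversality via Sard and the Poincar\'e-duality bookkeeping a bit more explicitly than the paper does, but the argument is the same.
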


\begin{proof}
For $n>10$ let $N^n=\symp\times S^{n-10}$ and consider $\zeta:=z\times[S^{n-10}]\in H_{n-3}(N)$, where $z\in H_7(\symp)$ is a generator. Let $f:M^7\imm \symp$ be an immersion representing $z$. Then $f\times \operatorname{id}:M\times S^{n-10}\imm N$ is an immersion representing $\zeta$, so $\zeta$ is immersed. 

On the other hand, if $\zeta$ were represented by an embedding $Z^{n-3}\hookrightarrow N$, then by taking transverse intersection with the inclusion $\symp\hookrightarrow N$ we would obtain an embedding $Z\cap\symp\hookrightarrow \symp$ representing $z$, a contradiction.
\end{proof}

\begin{rem}
Bohr, Hanke and Kotschick have already shown that $z$ is not embedded in \cite[Theorem 1]{BHK}, so the novel part of Theorem \ref{thm1} is that $z$ is immersed, but see Theorem~\ref{t:doublesymp} for 
an alternative proof that $z$ is not embedded.
As noted in \cite[Remark 2]{BHK}, since $H_*(\symp)$ is torsion free, 
every class in $H_*(\symp)$ is Steenrod representable. 
However, as we prove in Theorem \ref{thm4} below, 
a homology class being Steenrod representable does not entail its being immersed.
(The terminology ``immersed submanifold" in \cite[Remark 2]{BHK} is perhaps unfortunate.)
The examples in Theorem \ref{thm4} are $2$-torsion homology classes.
We do not know whether every Steenrod representable homology class of infinite order is immersed, nor whether
every homology class in a manifold with torsion-free homology is immersed.
\end{rem}

Our next result is about the total spaces $N$ of certain linear $S^{11}$-bundles over $S^{13}$.
The total spaces of these bundles were classified by Ishimoto \cite{Ishimoto}, who proved
that up to connected sum with homotopy $24$-spheres, 
there is a unique diffeomorphism type $N$ such that $Sq^2 \colon H^{11}(N; \Z_2) \to H^{13}(N; \Z_2)$ is non-zero.

\begin{thmA}\label{thm2}
Let $N$ be the total space of a linear $S^{11}$-bundle over $S^{13}$ such that $Sq^2 \colon H^{11}(N; \Z_2) \to H^{13}(N; \Z_2)$
is non-zero.  Then both generators of $H_{13}(N)$ are immersed but not embedded.
\end{thmA}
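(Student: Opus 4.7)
My approach is to address the immersed and not-embedded claims separately; the other generator of $H_{13}(N) \cong \Z$ then follows by orientation reversal. First I would run the Serre spectral sequence of $S^{11} \hookrightarrow N \to S^{13}$, which collapses for dimension reasons, giving integral generators $a \in H^{11}(N)$, $b \in H^{13}(N)$, and $ab \in H^{24}(N)$, with $\mathrm{PD}(z) = \pm a$ and $\langle ab, [N]\rangle = \pm 1$. Note also that $TN \oplus \epsilon \cong \pi^*(\xi \oplus TS^{13})$; since $\widetilde{KO}(S^{13}) = 0$, both $\xi$ and $TS^{13}$ are stably trivial, so $TN$ is itself stably trivial.

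For the not-embedded direction, suppose $f \colon M^{13} \hookrightarrow N^{24}$ is an embedding with $f_*[M] = z$ and oriented rank-$11$ normal bundle $\nu$. The Pontryagin--Thom collapse $\tau \colon N \to \mathrm{Th}(\nu)$ satisfies $\tau^* U \equiv a \pmod 2$ for the Thom class $U$, and the relation $Sq^2(U) = w_2(\nu) \cup U$ combined with naturality gives
\[
b = Sq^2(a) = f_*\bigl(w_2(\nu) \cap [M]\bigr) \in H^{13}(N;\Z_2),
\]
forcing $w_2(\nu) \neq 0$. Stable triviality of $TN$ then implies $w_2(M) = w_2(\nu) \neq 0$, so $M$ is not spin. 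Furthermore $f^*(a) = e(\nu) \in H^{11}(M;\Z)$ is a non-zero $2$-torsion class (odd-rank Euler class), forcing $2$-torsion in $H^{11}(M;\Z)$, while the projection $\pi \circ f \colon M \to S^{13}$ must be a degree-$1$ map. \emph{The main obstacle is to turn this bundle of constraints into a contradiction}; I expect it to come from analysing the source double-point obstructions alluded to in the abstract (which for any immersion representing $z$ live in $H_2(M;\Z_2)$ and whose Poincar\'e duals are controlled by $w_{11}(\nu)$), following the template of the $\symp$ argument of \thmref{thm1}.

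For the immersed direction, since $H_*(N)$ is torsion-free the class $z$ is Steenrod representable (as in the remark after \thmref{thm1}), so there exists $f \colon M^{13} \to N$ with $f_*[M] = z$. By Hirsch's immersion theorem, $f$ is homotopic to an immersion iff the stable difference $f^* TN - TM$ has an oriented rank-$11$ real vector bundle representative over $M$. Since $TN$ is stably trivial, this difference is stably $-TM$; surgery on $M$ below the middle dimension (which preserves $f_*[M] = z$) may be used to arrange that $M$ immerses in $\R^{24}$, whence the required rank-$11$ representative of $-TM$ exists and produces the immersion.
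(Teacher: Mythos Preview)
Both halves of your proposal contain genuine gaps.

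\textbf{Not embedded.} You correctly deduce from the Pontryagin--Thom collapse that $w_2(\nu)=w_2(M)\neq0$ and that $f^*(a)=e(\nu_f)$ is a nonzero $2$-torsion class in $H^{11}(M)$, but you then explicitly leave the contradiction unresolved. The Whitney-formula/double-point template you invoke from Theorem~\ref{thm1} does \emph{not} close the gap here: applying $Sq^2\circ\rho_2$ to $f^*(x)=e(\nu_f)+m_2(f)$ reduces the problem to showing that $w_2(M)\,w_{11}(\nu_M)=0$ for every orientable $13$-manifold, which the paper records only as an open conjecture (Conjecture~\ref{c:13m}), not a theorem. The paper's actual argument is of a different nature: Theorem~\ref{thm3} establishes a secondary-type obstruction, proved via an explicit identity among Steenrod operations in $H^{24}(MSO_{11};\Z_2)$. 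For the manifold $N$ the groups $H^{20}(N;\Z_2)$, $H^{19}(N)$, $H^{16}(N;\Z_2)$, $H^{15}(N)$ all vanish, so the left-hand side of that identity is forced to be zero, while the right-hand side equals $x\,Sq^2x\neq0$. The elementary characteristic-class constraints you extract are not strong enough to reproduce this.

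\textbf{Immersed.} The sentence ``surgery on $M$ below the middle dimension may be used to arrange that $M$ immerses in $\R^{24}$'' is not a proof. You have not equipped $f$ with normal-map data allowing surgery over $N$, nor argued that the resulting $13$-manifold has stable normal bundle admitting a rank-$11$ representative: the final obstruction lies in $H^{13}(M;\pi_{12}(SO/SO_{11}))$, which is not obviously zero. The paper bypasses this entirely by working with $N$ rather than an unknown $M$: since $N$ is stably parallelizable its top cell splits off stably, $\Sigma^\ell N\simeq \Sigma^\ell K\vee S^{24+\ell}$ with $K=S^{11}\cup_\eta e^{13}$; and since $\eta\in\operatorname{im}J$, the skeleton $K$ is itself the Thom space of a rank-$11$ bundle over $S^2$, yielding directly a stable map $N_+\rightsquigarrow MSO_{11}$ pulling the Thom class back to $x$.
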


Our proof of Theorem \ref{thm2} uses Theorem~\ref{thm3}, which
identifies an obstruction to embeddability which does not obstruct immersability.
This obstruction was initially discovered through a careful analysis and comparison 
of the Postnikov towers of the Thom space $MSO_{11}$ and $QMSO_{11}$, where $QX:=\lim_{\ell \to \infty} \Omega^\ell\Sigma^\ell X$ 
denotes the free infinite loopspace on a space $X$; see Section~\ref{ss:bg}. (Below $\rho_p:H^*(-)\to H^*(-;\Z_p)$ denotes mod $p$ reduction, and $P^i_p:H^*(-) \to H^{*+2i(p-1)}(-;\Z_p)$ denotes mod $p$ reduction followed by the $i$-th Steenrod power.)
 
\begin{thmA}\label{thm3}
Let $z\in H_{n-11}(N)$ be an embedded homology class in the closed smooth oriented $n$-manifold $N$, and let $x=PD(z)\in H^{11}(N)$ be its Poincar\'e dual cohomology class. Then there exist cohomology classes
\[
 \alpha\in H^{20}(N;\Z_2), \quad \beta,\beta' \in H^{19}(N),\quad \gamma \in H^{16}(N;\Z_2),\quad \delta\in H^{15}(N)
 \]
 such that 
 \[
 \rho_5(\beta)=P^1_5 x,\quad \rho_3(\beta')=P^2_3 x, \quad \rho_3(\delta)=P^1_3 x
 \]
 and
 \[
 (Sq^4+ Sq^3Sq^1)\alpha + Sq^5\beta + (Sq^8 + Sq^7Sq^1 + Sq^6Sq^2)\gamma + Sq^9 \delta = xSq^2 x + Sq^{11}Sq^2 x.
 \]
\end{thmA}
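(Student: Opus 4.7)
The plan is to reduce the stated identity in $H^{24}(N;\Z_2)$ to a universal identity in the cohomology of the Thom space $MSO_{11}$ and then pull back via the Pontryagin--Thom construction. Since $z$ is embedded, we have an embedding $j\colon M^{n-11}\hookrightarrow N$ with oriented rank-$11$ normal bundle $\nu\to M$, and the PT collapse $N\to Th(\nu)$ composed with the classifying map $Th(\nu)\to MSO_{11}$ gives a map $\hat x\colon N\to MSO_{11}$ with $\hat x^*U_{11}=x$, where $U_{11}\in H^{11}(MSO_{11})$ is the universal Thom class. Any universal identity in $H^*(MSO_{11})$ then pulls back to the corresponding identity in $H^*(N)$ under $\hat x^*$.

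For the odd-primary classes: since $H^*(BSO_{11};\Z_{(p)})$ is a polynomial algebra in Pontryagin classes for odd primes $p$, the Thom isomorphism shows $H^*(MSO_{11};\Z_{(p)})$ is torsion-free. Hence $P^1_5U_{11}$, $P^2_3U_{11}$ and $P^1_3U_{11}$ admit integral lifts $\tilde\beta,\tilde\beta',\tilde\delta\in H^*(MSO_{11})$, and we set $\beta:=\hat x^*\tilde\beta$, and similarly for $\beta'$ and $\delta$. For the mod $2$ identity, the Thom isomorphism $H^*(MSO_{11};\Z_2)\cong\Z_2[w_2,\ldots,w_{11}]\cdot U_{11}$, the relations $Sq^iU_{11}=w_iU_{11}$ for $i\le 11$ (and zero otherwise) and $U_{11}^2=w_{11}U_{11}$, together with the Cartan and Wu formulas, yield the explicit computation
\[
U_{11}Sq^2U_{11}+Sq^{11}Sq^2U_{11}=(w_3w_{10}+w_2^2w_9)U_{11}
\]
in $H^{24}(MSO_{11};\Z_2)$. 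What remains is to produce classes $\tilde\alpha\in H^{20}(MSO_{11};\Z_2)$ and $\tilde\gamma\in H^{16}(MSO_{11};\Z_2)$, together with mod $2$ reductions of $\tilde\beta$ and $\tilde\delta$ (making use of the indeterminacy in the integral lifts above), such that applying the stated combination of Steenrod operations reproduces $(w_3w_{10}+w_2^2w_9)U_{11}$; pulling back under $\hat x^*$ and setting $\alpha:=\hat x^*\tilde\alpha$ and $\gamma:=\hat x^*\tilde\gamma$ then yields the theorem.

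The main obstacle is this final identification of $\tilde\alpha,\tilde\gamma$ together with the verification of the Steenrod calculus, a finite but delicate computation in $\Z_2[w_2,\ldots,w_{11}]$. As the authors indicate, the specific linear combination of Steenrod operations appearing in the identity was discovered by analysing the Postnikov tower of $MSO_{11}$ through degree $24$ and comparing it with that of $QMSO_{11}$: the $k$-invariants of $MSO_{11}$ at each stage (built from Eilenberg--MacLane factors $K(\Z,11)$, $K(\Z/p,\ast)$ for $p=3,5$, and $K(\Z/2,\ast)$) force a specific Steenrod relation among Postnikov lifts, of which the identity stated is the concrete expression. That this relation is expected to fail in $QMSO_{11}$ is what will allow the obstruction to distinguish embedded from immersed classes in the proof of Theorem~\ref{thm2}.
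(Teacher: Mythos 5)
Your overall framework agrees with the paper's: reduce everything to a universal statement about the Thom space, i.e.\ produce classes $\tilde\alpha,\tilde\beta,\tilde\beta',\tilde\gamma,\tilde\delta$ in $H^*(MSO_{11})$ and $H^*(MSO_{11};\Z_2)$ satisfying the analogous relations with the Thom class $t$ in place of $x$, and then pull back along the map $N_+\to MSO_{11}$ classifying the embedded class. Your computation of the right-hand side, $t\,Sq^2t+Sq^{11}Sq^2t=w_{11}(w_{10}w_3+w_9w_2^2)$, is also correct. However, there is a genuine gap: the entire content of the theorem is the \emph{existence} of classes on which the stated combination of operations produces this right-hand side, and you explicitly defer exactly that step (``what remains is to produce classes $\tilde\alpha$, $\tilde\gamma$, together with mod $2$ reductions of $\tilde\beta$ and $\tilde\delta$ \ldots''). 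Nothing in your argument shows such classes exist; a priori the element $w_{11}(w_{10}w_3+w_9w_2^2)$ might simply fail to lie in the image of the relevant operations, and your appeal to the Postnikov-tower origin of the formula explains only where the operations came from, not why the identity holds. The paper closes this gap by exhibiting explicit classes, namely $\tilde\alpha=w_{11}w_6w_3$, $\tilde\gamma=w_{11}w_3w_2$, and the \emph{integral} classes $\tilde\beta=t(p_1^2-2p_2)$, $\tilde\beta'=tp_2$, $\tilde\delta=tp_1$, and then verifying the mod $2$ identity term by term with the Wu and Cartan formulas (using $\rho_2(\tilde\beta)=w_{11}w_2^4$ and $\rho_2(\tilde\delta)=w_{11}w_2^2$).

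A second, related gap concerns the odd-primary conditions. You argue only that $P^1_5t$, $P^2_3t$, $P^1_3t$ admit \emph{some} integral lifts because the odd torsion of $H^*(MSO_{11})$ vanishes, and you gesture at ``indeterminacy'' in the choice of lift. But the final mod $2$ identity must hold simultaneously with the mod $3$ and mod $5$ conditions, so you need specific lifts whose mod $2$ reductions you control; an arbitrary lift will not do, and your proposal gives no mechanism for choosing one. The paper handles this by citing Wu's theorem (\cite[Theorem 19.7]{MS}) which identifies $\rho_5\bigl(t(p_1^2-2p_2)\bigr)=P^1_5t$, $\rho_3(tp_2)=P^2_3t$ and $\rho_3(tp_1)=P^1_3t$ on the nose, so that the same explicit classes serve both the odd-primary conditions and the mod $2$ computation. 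Until you exhibit such classes and carry out (or otherwise certify) the Steenrod-algebra verification in $H^{24}(MSO_{11};\Z_2)$, the proof is incomplete.
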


We conclude this summary of our main results by stating Theorem \ref{thm4}, which shows that a homology class being Steenrod representable does not imply its being immersed.

\begin{thmA}\label{thm4}
For all $n \geq 27$, there exist closed oriented $n$-manifolds $N$ and $2$-torsion classes $z \in H_{n-4}(N)$,
which are Steenrod representable but not immersed.
\end{thmA}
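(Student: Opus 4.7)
My plan mirrors the strategy that produces Theorems~\ref{thm2} and~\ref{thm3}: derive a cohomological obstruction specific to immersability of a codimension-$4$ class, and then exhibit closed oriented manifolds carrying $2$-torsion classes which are Steenrod representable yet detected by that obstruction.

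\textbf{Step 1 (Obstruction theory).} By Wells' theorem, bordism classes of immersions $M^{n-4}\imm N^n$ correspond to (a component of) $[N,QMSO_4]$, with the Poincar\'e dual $x=PD(z)$ of the represented homology class recovered by composition with the map $QMSO_4\to K(\Z,4)$ induced by the Thom class $u\in H^4(MSO_4;\Z)$. Steenrod representability of $z$ only requires $x$ to lift through the corresponding map for the Thom spectrum $MSO$. Comparing the Postnikov towers of $QMSO_4$ and of $MSO$ in the low-degree range relevant to $2$-torsion classes in $H^4(-;\Z)$---an analysis parallel to the $MSO_{11}$ versus $QMSO_{11}$ comparison behind Theorem~\ref{thm3}---I expect to extract a secondary mod-$2$ cohomology operation $\Theta$, defined on $2$-torsion classes in $H^4(N;\Z)$, whose vanishing is forced by immersability but not by mere Steenrod representability. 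The target degree of $\Theta$ is what will dictate the numerical bound $n\geq 27$.

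\textbf{Step 2 (Realisation in dimension $27$).} Next, construct a closed oriented $27$-manifold $N_0$ together with a $2$-torsion class $z_0\in H_{23}(N_0)$ for which $\Theta(PD(z_0))\neq 0$. Natural candidates include oriented sphere bundles over a carefully chosen base, or manifolds obtained by plumbing, engineered so that $H^4(N_0;\Z)$ contains an explicit $2$-torsion element (equivalently, an integral Bockstein of a prescribed mod-$2$ class in $H^3(N_0;\Z/2)$) whose mod-$2$ reduction triggers a non-zero value of $\Theta$. Steenrod representability of $z_0$ is verified directly by exhibiting it as $g_*[M_0]$ for some closed oriented $23$-manifold $M_0$ and map $g\colon M_0\to N_0$; in this codimension and degree range such manifold representatives of $2$-torsion classes are available by classical Thom-type arguments.

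\textbf{Step 3 (Extension to all $n\geq 27$).} For $n>27$, set $N:=N_0\times S^{n-27}$ and $z:=z_0\times [S^{n-27}]\in H_{n-4}(N)$. Steenrod representability is preserved under taking products with spheres. Conversely, if $z$ were represented by an immersion $f\colon M^{n-4}\imm N$, then making $f$ transverse to $N_0\times\{*\}\subset N$ and restricting would yield an immersion $f^{-1}(N_0\times\{*\})\imm N_0$ representing $z_0$, contradicting Step~2. This is the same product-and-slice argument used in the corollary to Theorem~\ref{thm1}.

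\textbf{Main obstacle.} The crux is the combination of Steps~1 and~2: isolating a usable secondary operation $\Theta$ in the relatively intricate low-degree cohomology of $QMSO_4$, and simultaneously building a concrete $27$-manifold whose integral cohomology supports a Steenrod representable $2$-torsion class detected by $\Theta$. Once both ingredients are in hand, Step~3 is routine.
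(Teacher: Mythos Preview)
Your outline is programmatic rather than a proof: Steps~1 and~2 contain no actual obstruction and no actual manifold, and you concede this in your final paragraph. The paper's argument is both concrete and considerably simpler; in particular it avoids any Postnikov analysis of $QMSO_4$ and uses a \emph{primary} operation, not a secondary one. The move you are missing is to pass to the unoriented problem: if $z$ is immersed then its mod-$2$ reduction $\rho_2(z)$ is immersed by an unoriented manifold, which unlocks the Grant--Sz\H{u}cs obstruction \cite{GSz}: for $I$ admissible of excess $k$, nonvanishing of $\beta_2 Sq^I$ on the mod-$2$ Poincar\'e dual obstructs immersability. In codimension~$4$ one writes $x=\beta_2(X)$ for some $X\in H^3(N;\Z_2)$, so that $\rho_2(x)=Sq^1X$; the sequence $(6,2)$ has excess $4$, and Browder's Bockstein spectral sequence calculation shows $\beta_2 Sq^6Sq^2Sq^1(\iota_3)\neq 0$ in $K(\Z_2,3)$.

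The manifold is then built by the standard thickening technique, not sphere bundles or plumbing: embed the $14$-skeleton $K=K(\Z_2,3)^{(14)}$ in $\R^{n+1}$ and set $N=\partial W$ for $W$ a regular neighbourhood. Poincar\'e--Lefschetz duality gives $H^{13}(W,N)\cong H_{n-12}(K)=0$ once $n-12\geq 15$, so $H^{13}(K)\hookrightarrow H^{13}(N)$ and the obstruction survives in $N$; this is the origin of the bound $n\geq 27$, and it handles all such $n$ at once, so your Step~3 is not needed. Steenrod representability requires no bespoke construction either: $z=PD(\beta_2 X)$ is $2$-torsion, and every torsion integral homology class is Steenrod representable by \cite{C-F}.
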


\subsection{Background} \label{ss:bg}
The classical Steenrod representability problem asks the following: given 
a finite complex $X$ and a homology class $z \in H_m(X)$,
does there exist a closed oriented $m$-manifold $M$ with fundamental class $[M]\in H_m(M)$ and a continuous map $f\colon M\to X$ such that $z=f_*[M]$? In his remarkable 1954 paper \cite{Thom} Ren\'e Thom showed that not every integral homology class is so representable, but that every class is representable after multiplication by a positive 
integer. (Finding universal bounds on this integer for a fixed $m$ is a subject of current research, see \cite{Roz}.) Thom also showed that for the analogous problem in mod $2$ homology and unoriented manifolds, every homology class is representable. In both cases, Thom's method was to relate Steenrod's problem to the problem of representability by embeddings in manifolds, which he then interpreted as a lifting problem. 

Recall that a homology class $z\in H_{n-k}(N)$ in a smooth $n$-manifold $N$ is \emph{embedded} if there exists a closed oriented $(n-k)$-manifold $M$ with fundamental class $[M]\in H_{n-k}(M)$ and a smooth embedding $f\colon M\hookrightarrow N$ such that $z=f_*[M]$. Since $N$ is oriented, $z$ admits a Poincar\'e dual cohomology class $PD(z)\in H^k(N)\cong\tilde{H}^k(N_+)$. Thom's key insight was that there exists a universal embedded cohomology class in codimension $k$. Namely, he constructed a space $MSO_k$ with a cohomology generator $t_k\in \tilde{H}^k(MSO_k)$ such that $z$ is embedded if and only if $PD(z)$ is the image of $t_k$ under the map in cohomology induced by some pointed map $N_+\to MSO_k$. Put another way, $z$ is embedded if and only if the map $N_+\to K(\Z,k)$ representing $PD(z)$ admits a lift up to homotopy through the map $MSO_k\to K(\Z,k)$ representing $t_k$. (A similar statement holds in the unoriented case, using the Thom class $MO_k\to K(\Z_2,k)$.) 

This allowed Thom to prove both positive and negative embeddability results, using methods which are by now classical; namely, Postnikov towers and cohomology operations. For example, since all torsion in the cohomology of $MSO_k$ is $2$-torsion, it follows that for any odd prime $p$ and $i\ge1$, the operation $\beta P^i_p$ annihilates the Thom class $t_k$. (Here $\beta\colon H^*(-;\Z_p)\to H^{*+1}(-)$ is the Bockstein operator.) Hence the same must be true for any embedded cohomology class. In this way he was able to find a $7$-dimensional homology class in a $14$-manifold (a product of lens spaces) which is not embedded, and in fact not even Steenrod representable \cite[pp.62--63]{Thom}. On the positive side, by analysing the Postnikov tower of $MSO_k$ for small values of $k$, he was able to show that any homology class in a closed oriented manifold of dimension at most $9$ is embedded. The smallest dimensional case left open by Thom was determining whether there exists a $10$-manifold with a $7$-dimensional non-embedded homology class. This was eventually settled by Bohr, Hanke and Kotschick \cite{BHK} who showed that the generator of $H_7(\symp)$ is not embedded. We will return to this example in much greater detail in Section \ref{S:proofA}. 

There is an analogous obstruction-theoretic interpretation of the problem of representability by immersions. A homology class $z\in H_{n-k}(N)$ is \emph{immersed} if there exists a closed oriented $(n{-}k)$-manifold $M$ and an immersion $f\colon M\imm N$ such that $f_*[M]=z$. By the Compression Theorem of Rourke and Sanderson \cite{RS1,RS3}, for $k\ge2$ and $\ell\gg0$ immersions $f\colon M\imm N$ up to regular homotopy correspond to embeddings $g\colon M\hookrightarrow N\times \R^\ell$ with normal $\ell$-fields up to isotopy. Combining this with the insights  of Thom, one can show that $z$ is immersed if and only if $PD(z)$ is the image of $t_k$ under some \emph{stable} map $N_+\rightsquigarrow MSO_k$. More precisely, $z$ is immersed if and only if there is a map $\varphi\colon\Sigma^\ell N_+\to \Sigma^\ell MSO_k$ for some $\ell\ge0$ such that $PD(z)$ is the image of $t_k$ under the composition $\tilde{H}^k(MSO_k)\cong \tilde{H}^{k+\ell}(\Sigma^\ell MSO_k)\stackrel{\varphi^*}{\to}\tilde{H}^{k+\ell}(\Sigma^\ell N_+)\cong H^k(N)$.  Using standard adjunctions, this can be rephrased as follows: for a pointed space $X$ let $QX:=\lim_{\ell \to \infty} \Omega^\ell\Sigma^\ell X$ be the free infinite loopspace on $X$. As the Thom class $t_k\colon MSO_k\to K(\Z,k)$ is a map with target an infinite loopspace, it extends to an infinite loop map $\tilde{t}_k\colon QMSO_k\to K(\Z,k)$. This map deserves to be called the universal Thom class for immersions, since $z$ is immersed if and only if the map $N_+\to K(\Z,k)$ representing $PD(z)$ admits a lift up to homotopy through $\tilde{t}_k\colon QMSO_k\to K(\Z,k)$. 

The ideas in the previous paragraph can be traced back to the thesis of Robert Wells \cite{Wells} on cobordism groups of immersions. They were applied by the second author and Sz\H{u}cs \cite{GSz} in the unoriented case to give the first examples of mod $2$ homology classes not representable by immersions. Generally speaking though, less is known about representability by immersions than representability by embeddings. In particular, we are not aware of any systematic anlaysis of the Postnikov tower of $QMSO_k$ in order to prove immersability results.

\subsection*{Outline}
In Section \ref{S:proofA} we prove Theorem \ref{thm1}. In Section \ref{S:proofsBC} we prove Theorems \ref{thm2} and \ref{thm3}, and outline an alternative approach using Postnikov towers. In Section \ref{S:SRnotim} we prove Theorem~\ref{thm4}.
In Section \ref{S:Whitney} we consider the double-point manifolds of self-transverse immersions, giving a
new proof of the non-embedding statement of Theorem~\ref{thm1}, and
we present examples of formal immersions representing the homology classes appearing in Theorems \ref{thm1} and \ref{thm2}.

\subsection*{Acknowledgements}
We wish to acknowledge Zhenhua Liu's MathOverflow post and the discussion there, which motivated and informed this work.
In particular the comment by Lennart Meier to this post drew our attention to the generators of $H_7(\symp)$ and to the work of Bohr, Hanke and Kotschick \cite{BHK}.

\section{Proof of Theorem A}\label{S:proofA}
Recall that the symplectic group $\symp$ is the compact Lie group of $2\times 2$ quaternionic matrices preserving the Hermitian form $\langle x,y\rangle =\overline{x_1}y_1+\overline{x_2}y_2$ on $\mathbb{H}^2$. The columns of such a matrix are unit norm vectors in $\mathbb{H}^2$, and thus projection onto the first column yields a principal bundle
\[
\xymatrix{
S^3=Sp(1) \ar[r] & \symp \ar[r] & S^7.
}
\]
Although the spectral sequence of this bundle collapses for reasons of dimension, the bundle is non-trivial. In fact, it is well known that $\symp$ has a minimal cell structure of the form
\[
\symp\simeq (S^3\cup_{\omega'} e^7)\cup_f e^{10},
\]
where the attaching map of the $7$-cell is the Blakers--Massey generator $\omega'\in \pi_6(S^3)\cong\Z_{12}$ \cite[pp. 201--202]{JW}. 
Since $\omega'$ is detected by the Steenrod power $P^1_3$, the operation $P^1_3\colon H^3(\symp; \Z_3) \to H^7(\symp; \Z_3)$ is non-trivial, showing that $\symp$ is not a product of spheres \cite[Cor.\ 13.5]{BorelSerre}.  

The Steenrod power $P^1_3$ was also used by Bohr--Hanke--Kotschick \cite{BHK} to show that a generator $z\in H_7(\symp)$ is not embedded. We now briefly recall their argument. Supposing $z$ is embedded, the Poincar\'e dual class $x:=PD(z)\in H^3(\symp)$ equals $f^*(t)$ for some map $f\colon (\symp)_+\to MSO(3)$, where $t\in \widetilde{H}^3(MSO(3))$ is the Thom class. Since $H^*(BSO(3);\Z_3)\cong\Z_3[p_1]$ is polynomial on the mod $3$ reduction of the Pontryagin class, $\tilde{H}^*(MSO(3);\Z_3)$ is concentrated in degrees $4s{+}3$. Therefore $tP^1_3(t)\in H^{10}(MSO(3);\Z_3)=0$. By naturality, it follows that $0=xP^1_3(x)\in H^{10}(\symp;\Z_3)$. This gives a contradiction, since by Poincar\'e duality with mod $3$ coefficients the generators $x\in H^3(\symp;\Z_3)$ and $P^1_3(x)\in H^7(\symp;\Z_3)$ must pair non-trivially.

Thus to complete the proof of Theorem \ref{thm1}, it remains to prove the following.

\begin{prop} \label{Sp2}
Each generator $z\in H_7(\symp)$ is immersed.
\end{prop}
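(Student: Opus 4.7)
The strategy is to exploit the characterisation from Section~\ref{ss:bg}: the class $z$ is immersed iff $x = PD(z) \in H^3(\symp)$ admits a stable lift through the universal Thom map $MSO_3 \to K(\Z,3)$. Equivalently, one seeks a stable map $\Sigma^\ell \symp_+ \to \Sigma^\ell MSO_3$ pulling the Thom class back to the $\ell$-fold suspension of $x$. The plan is to construct such a lift cell-by-cell, using the decomposition $\symp \simeq (S^3 \cup_{\omega'} e^7) \cup e^{10}$.

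On the $3$-skeleton $S^3$, a stable lift exists canonically via the bottom-cell inclusion $t_3 \colon S^3 \hookrightarrow MSO_3$, which represents the Thom class. The first obstruction to extending over the $7$-cell is the stable class
\[
t_3 \circ \omega' \in \pi^s_6(MSO_3).
\]
The crucial contrast with the Bohr--Hanke--Kotschick embedding obstruction is that this is a \emph{stable} obstruction; the unstable cup-product relation $x \cdot P^1_3 x \neq 0$ that they exploit vanishes for stable maps, because cup products vanish after suspension. I would show $t_3 \circ \omega' = 0$ one prime at a time. At the prime $3$: the $3$-primary part of the stabilisation of $\omega'$ is the Hopf element $\alpha_1 \in (\pi_3^s)_{(3)} \cong \Z/3$, detected by $P^1_3$; since $P^1_3 t_3 = p_1 t_3 \neq 0$ in $H^7(MSO_3; \Z/3)$, the spectrum $MSO_3$ has a mod-$3$ $7$-cell attached to its bottom $3$-cell precisely by $\alpha_1$, so $t_3 \circ \alpha_1$ is already nullhomotopic inside $MSO_3$. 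At the prime $2$: the $2$-primary part of the stabilisation of $\omega'$ is a multiple of $\nu \in (\pi_3^s)_{(2)} \cong \Z/8$; since Wu's formula gives $Sq^4 t_3 = w_2^2 t_3 \neq 0$ in $H^7(MSO_3; \Z/2)$, $MSO_3$ has a mod-$2$ $7$-cell attached to the bottom cell by $\nu$, so the $2$-primary part of $t_3 \circ \omega'$ vanishes for the same reason.

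The remaining obstruction to extending the lift over the $10$-cell lies in $\pi^s_9(MSO_3)$; I expect this to be handled by an analogous prime-by-prime analysis, using the Atiyah--Hirzebruch spectral sequence together with identifications of stable attaching maps of $MSO_3$ via Steenrod operations on $t_3$. The main obstacle, and likely the most delicate part of the argument, is controlling this second obstruction: the set of lifts over the $7$-skeleton is a torsor over $\pi^s_7(MSO_3)$, providing some room to adjust, but carrying this out requires tracking the attaching map $S^9 \to S^3 \cup_{\omega'} e^7$ of the $10$-cell---which encodes the nontrivial principal $Sp(1)$-bundle structure of $\symp$---and ensuring the resulting obstruction is either zero outright or lies in the image of the indeterminacy coming from changing the $7$-skeleton lift.
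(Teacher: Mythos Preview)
Your approach has two genuine gaps, and the paper's argument sidesteps both with structural observations you are missing.

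\textbf{The top cell.} You do not prove that the obstruction in $\pi^s_9(MSO_3)$ vanishes; you only say you ``expect'' it can be handled. This is the crux of the problem, and a prime-by-prime Atiyah--Hirzebruch analysis here is genuinely nontrivial. The paper avoids this entirely: since $\symp$ is a Lie group it is parallelizable, hence its top cell splits off stably, giving $\Sigma^2\symp \simeq \Sigma^2 K \vee S^{12}$ with $K=S^3\cup_{\omega'}e^7$. One then simply sends the $S^{12}$ summand to zero, so no $10$-cell obstruction ever needs to be computed.

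\textbf{The $7$-cell at the prime $2$.} Your claim that $Sq^4 t_3 = w_2^2 t_3$ is false: the Thom formula gives $Sq^i t_3 = w_i t_3$, so $Sq^4 t_3 = w_4 t_3 = 0$ in $H^7(MSO_3;\Z_2)$ since the universal bundle has rank $3$. Thus the cell structure of $MSO_3$ does not, on its face, show that the bottom cell absorbs the $2$-primary part of $\omega'$. The paper's argument here is cleaner and works unstably: since the $J$-homomorphism $\pi_3(SO_3)\to\pi_6(S^3)$ is surjective, $\omega'=J(\alpha)$ for some $\alpha$, so $K$ is the Thom space of a rank-$3$ bundle $\xi\to S^4$. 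The classifying map for $\xi$ then gives an \emph{unstable} map $f_K\colon K\to MSO_3$ with $f_K^*(t)=x|_K$, and combined with the stable splitting above this finishes the proof in one line.

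In short, you are attempting an obstruction-theoretic climb that the paper bypasses with two facts: $K$ is already a Thom space, and the top cell is stably a free summand.
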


\begin{proof}
Since $\symp$ is a Lie group it is parallelizable, hence stably parallelizable. It follows that the top cell splits off stably; in fact, by a result of Mimura \cite{Mimura}, two suspensions is enough, and we have
\[
\Sigma^2 \symp \simeq (S^5\cup_{\Sigma^2\omega'} e^9)\vee S^{12} = \Sigma^2 K\vee S^{12},
\]
where $K=S^3\cup_{\omega'}e^7$ denotes the $7$-skeleton of $\symp$. 

The $J$-homomorphism $\pi_3(SO(3))\to \pi_6(S^3)$ being onto, there exists a rank $3$ vector bundle $\xi\to S^4$ such that 
$K\simeq \Th(\xi)$, the Thom space of $\xi$.  (Here we use the fact that the Thom space of a rank $q$ vector bundle over $S^{p+1}$ with clutching function $\alpha\in \pi_p(SO(q))$ is homotopy equivalent to the mapping cone $S^q \cup_{J(\alpha)} e^{p+q+1}$ of the image of $\alpha$ under $J$. See \cite[Lemma 1]{Milnor}; a proof in the complex case appears in the undergraduate thesis of Eva Belmont \cite[Lemma 2.4.3]{Belmont}.)

A generator $z\in H_7(\symp)$ is immersed if and only if there is a stable map $f\colon(\symp)_+\rightsquigarrow MSO(3)$ such that $f^*(t)=x$, where $x=PD(z)\in H^3(\symp)$ and $t\in H^3(MSO(3))$ is the Thom class. Let $x|_K\in H^3(K)\cong\Z$ denote the restriction of $x$ to the $7$-skeleton. As $x|_K$ is a generator, it must be a Thom class $t(\xi)$. Hence there is a map $f_K\colon K\to MSO(3)$ such that $f_K^*(t)=x|_K$.  Now by the above splitting we have a stable map
\[
\xymatrix{
\Sigma^2\symp \simeq \Sigma^2 K\vee S^{12} \ar[rr]^-{\Sigma^2f_K \vee 0} && \Sigma^2MSO(3),
}
\]
under which the suspension of the Thom class pulls back to the suspension of $x$.
 \end{proof} 

\section{Proof of Theorems B and C}\label{S:proofsBC}

We begin by proving Theorem \ref{thm3}, which is restated here for convenience.

\begin{thm}
Let $z\in H_{n-11}(N)$ be an embedded homology class in the closed smooth oriented $n$-manifold $N$, and let $x=PD(z)\in H^{11}(N)$ be its Poincar\'e dual cohomology class. Then there exist cohomology classes
\[
 \alpha\in H^{20}(N;\Z_2), \quad \beta,\beta' \in H^{19}(N),\quad \gamma \in H^{16}(N;\Z_2),\quad \delta\in H^{15}(N)
 \]
 such that 
 \[
 \rho_5(\beta)=P^1_5 x,\quad \rho_3(\beta')=P^2_3 x, \quad \rho_3(\delta)=P^1_3 x
 \]
 and
 \[
 (Sq^4+ Sq^3Sq^1)\alpha + Sq^5\beta + (Sq^8 + Sq^7Sq^1 + Sq^6Sq^2)\gamma + Sq^9 \delta = xSq^2 x + Sq^{11}Sq^2 x.
 \]
 \end{thm}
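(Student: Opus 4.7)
The plan is to reduce the theorem to a universal identity in the mod-$2$ cohomology of the Thom space $MSO_{11}$ and then invoke naturality. By Thom's theorem, $z$ being embedded is equivalent to the existence of a pointed map $f\colon N_+ \to MSO_{11}$ with $f^*(t_{11})=x$, where $t_{11} \in \tilde{H}^{11}(MSO_{11})$ is the Thom class. All the required cohomology classes on $N$ will be produced as $f^*$ of universal classes on $MSO_{11}$.

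The classes $\beta$, $\beta'$ and $\delta$ come essentially for free from Thom's computation that $H^*(MSO_k)$ has only $2$-torsion. Consequently, for every odd prime $p$ the mod-$p$ reduction $H^*(MSO_{11}) \to H^*(MSO_{11};\Z_p)$ is surjective, so one can pick integral lifts $\tilde{\beta}_0, \tilde{\beta}'_0 \in H^{19}(MSO_{11})$ and $\tilde{\delta}_0 \in H^{15}(MSO_{11})$ of $P^1_5 t_{11}$, $P^2_3 t_{11}$ and $P^1_3 t_{11}$ respectively, and set $\beta := f^*(\tilde{\beta}_0)$, $\beta' := f^*(\tilde{\beta}'_0)$, $\delta := f^*(\tilde{\delta}_0)$. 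Naturality of the mod-$p$ reductions and of the Steenrod powers $P^i_p$ then yields the three stated reduction identities.

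The substance of the theorem is the mod-$2$ identity, and the plan is to establish the corresponding universal identity in $H^{24}(MSO_{11};\Z_2)$:
\[
 (Sq^4+Sq^3Sq^1)\tilde{\alpha}_0 + Sq^5 \rho_2\tilde{\beta}_0 + (Sq^8+Sq^7Sq^1+Sq^6Sq^2)\tilde{\gamma}_0 + Sq^9 \rho_2\tilde{\delta}_0 = t_{11}\,Sq^2 t_{11} + Sq^{11}Sq^2 t_{11},
\]
for suitable $\tilde{\alpha}_0 \in H^{20}(MSO_{11};\Z_2)$ and $\tilde{\gamma}_0 \in H^{16}(MSO_{11};\Z_2)$. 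Setting $\alpha := f^*(\tilde{\alpha}_0)$, $\gamma := f^*(\tilde{\gamma}_0)$ and pulling back via $f^*$ gives the stated equation in $H^{24}(N;\Z_2)$.

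The main obstacle is establishing this universal identity. The approach is to unpack both sides through the Thom isomorphism $\phi\colon H^j(BSO_{11};\Z_2) \xrightarrow{\cong} H^{j+11}(MSO_{11};\Z_2)$ together with Thom's formula $Sq^i t_{11} = \phi(v_i)$, where $v_i$ is the $i$-th Wu class of the universal oriented rank-$11$ bundle. Via the Cartan formula and the standard expression of the $v_i$ as polynomials in the Stiefel--Whitney classes $w_2,\dots,w_{11}$, the right-hand side expands into an explicit element of the finite-dimensional $\Z_2$-vector space $H^{13}(BSO_{11};\Z_2)$, while the left-hand side spans a computable subspace; the identity then reduces to a finite linear-algebra verification. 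A minor technicality is that $\rho_2\tilde{\beta}_0$ and $\rho_2\tilde{\delta}_0$ depend on the choice of integral lifts, but any two lifts differ by a class whose mod-$p$ reduction ($p=5$ or $3$) vanishes, and the resulting ambiguity on the left-hand side can be absorbed into adjustments of $\tilde{\alpha}_0$ and $\tilde{\gamma}_0$ without disturbing the identity. As indicated in the introduction, the same identity can equivalently be extracted from the Postnikov tower of $MSO_{11}$ through degree $24$, where it appears as a secondary cohomology operation arising from a $k$-invariant; this viewpoint makes manifest why the obstruction survives in $MSO_{11}$ while disappearing in $QMSO_{11}$, and hence distinguishes embedded from immersed classes.
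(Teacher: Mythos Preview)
Your proposal is correct and follows exactly the paper's approach: reduce to a universal identity in $H^{24}(MSO_{11};\Z_2)$ via a map $f\colon N_+\to MSO_{11}$ with $f^*(t)=x$, and pull back by naturality. The paper carries this out explicitly, choosing $\tilde\alpha=w_{11}w_6w_3$, $\tilde\beta=t(p_1^2-2p_2)$, $\tilde\beta'=tp_2$, $\tilde\gamma=w_{11}w_3w_2$, $\tilde\delta=tp_1$ (so that Wu's theorem on Steenrod powers of Pontryagin classes gives the mod-$p$ reductions directly) and then verifying the degree-$24$ identity by a direct Steenrod-square computation in $H^*(BSO_{11};\Z_2)$; note that the formula you invoke should read $Sq^i t_{11}=\phi(w_i)$ with $w_i$ the Stiefel--Whitney class, not the Wu class.
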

 
 \begin{proof}
As $z$ is embedded there exists a map $f\colon N_+\to MSO_{11}$ such that $f^*(t)=x$, where $t\in \tilde{H}^{11}(MSO_{11})$ is the Thom class. In the cohomology of $MSO_{11}$ we have the following classes, where we have identified $\tilde{H}^*(MSO_{11};\Z_2)$ with the ideal in $H^*(BSO_{11};\Z_2)$ generated by $w_{11}$:
 \[
 \tilde{\alpha}=w_{11}w_6w_3,\quad \tilde{\beta}=t(p_1^2-2p_2),\quad \tilde{\beta'}=t p_2,\quad \tilde{\gamma} = w_{11}w_3w_2,\quad \tilde{\delta} = t p_1.
 \] 
According to \cite[Theorem 19.7]{MS}, attributed to Wu Wen-Tsun \cite{Wu}, one has
\[
\rho_5(\tilde{\beta})=P^1_5 t,\quad \rho_3(\tilde{\beta'})=P^2_3 t,\quad \rho_3(\tilde{\delta})=P^1_3 t.
\]
It therefore suffices to check that the equation
\[
 (Sq^4+ Sq^3Sq^1)\tilde\alpha + Sq^5\tilde\beta + (Sq^8 + Sq^7Sq^1 + Sq^6Sq^2)\tilde\gamma + Sq^9 \tilde\delta = tSq^2 t + Sq^{11}Sq^2 t
 \]
 holds in $H^{24}(MSO_{11};\Z_2)$, for then by setting $\alpha=f^*(\tilde\alpha)$, $\beta=f^*(\tilde\beta)$ etc.\ we obtain classes in the cohomology of $N$ which by naturality satisfy the stated properties.

First observe that $\rho_2(t)=w_{11}$, and by the formulae of Wu and Cartan we have
\[
w_{11}Sq^2 w_{11} = w_{11}(w_{11}w_2),\quad Sq^{11}Sq^2 w_{11}=w_{11}(w_{11}w_2 + w_{10}w_3 + w_9w_2^2).
\]
Therefore the right-hand side equals $w_{11}(w_{10}w_3 + w_9w_2^2)$.
For the left-hand side we sum up the elements
\begin{align*}
Sq^4 \tilde\alpha  & = Sq^4(w_{11}w_6w_3)\\
& = w_{11}(w_6w_4w_3 + w_7w_3^2 +  w_6w_5w_2 + w_{10}w_3\\
 & \phantom{aaaa} +w_8w_3w_2+w_6w_4w_3+ w_6w_5w_2 + w_6w_3w_2^2 + w_7w_3^2)\\
 & = w_{11}(w_{10}w_3 + w_8w_3w_2 + w_6w_3w_2^2),\\ 
Sq^3Sq^1 \tilde\alpha  & = Sq^3Sq^1(w_{11}w_6w_3)\\
 & = Sq^3(w_{11}w_7w_3)\\
 & = w_{11}w_7w_3^2,\\
Sq^5\tilde\beta & = Sq^5(w_{11}w_2^4)\\
 & = w_{11}w_5w_2^4,\\
 Sq^8\tilde\gamma & = Sq^8(w_{11}w_3w_2)\\
                            & =  w_{11}(w_8w_3w_2 + w_7w_3^2 + w_6w_5w_2 + w_6w_3w_2^2 + w_6w_3w_2^2\\
  & \phantom{aaaa} + w_5w_3^2w_2 + w_5^2w_3 + w_5w_3^2w_2\\
    & \phantom{aaaa} + w_4w_3^3 + w_5w_4w_2^2+w_4w_3w_2^3+ w_3^3w_2^2)\\
  & = w_{11}(w_8w_3w_2 + w_7w_3^2 + w_6w_5w_2 + w_5^2w_3+ w_5w_4w_2^2+ w_4w_3^3 + w_4w_3w_2^3+ w_3^3w_2^2),\\
Sq^7Sq^1\tilde\gamma & = Sq^7Sq^1(w_{11}w_3w_2)\\
& = Sq^7(w_{11}w_3^2)\\
 & = w_{11}(w_7w_3^2 + w_5^2w_3+w_3^3w_2^2),\\
 Sq^6Sq^2\tilde\gamma & = Sq^6Sq^2(w_{11}w_3w_2) \\
  & = Sq^6(w_{11}(w_3w_2^2+ w_5w_2))\\
 & = w_{11}(w_6w_5w_2 + w_6w_3w_2^2 + w_5^2w_3 + w_5w_4w_2^2 + w_4w_3w_2^3\\
 & \phantom{aaaa} + w_4w_3^3 + w_5w_4w_2^2 + w_5w_4w_2^2 + w_3^3w_2^2 + w_5w_3^2w_2 \\
  & \phantom{aaaa} + w_3^3w_2^2 + w_3w_2^5 + w_9w_2^2 + w_7w_2^3+w_6w_3w_2^2+w_5w_4w_2^2\\
   & \phantom{aaaa} + w_5w_3^2w_2 + w_5w_2^4 + w_5w_2^4 + w_3w_2^5 + w_5^2w_3\\
    & \phantom{aaaa} + w_9w_2^2 + w_7w_2^3 + w_6w_3w_2^2 + w_5w_4w_2^2) \\
    & = w_{11}(w_6w_5w_2 + w_6w_3w_2^2 + w_5w_4w_2^2 + w_4w_3w_2^3 + w_4w_3^3),\\ 
Sq^9\tilde\delta & = Sq^9(w_{11}w_2^2)\\
& = w_{11}(w_9w_2^2 + w_7w_3^2 + w_5w_2^4)
\end{align*}
and obtain the same expression, as the reader can easily check.
\end{proof}

\begin{rem}
The above obstruction was discovered as follows. An integral homology class $z\in H_{n-k}(N)$ is embedded (respectively, immersed) if and only if the Poincar\'e dual class $PD(z)\in H^k(N)$, viewed as a map $N_+\to K(\Z,k)$, admits a lift through the universal Thom class $t_k\colon MSO_k\to K(\Z,k)$ (respectively, the universal Thom class for immersions $\tilde{t}_k\colon QMSO_k\to K(\Z,k)$). In either case the Thom class in question is the bottom Postnikov piece, and so the obstructions to lifting are the pullbacks of the $k$-invariants of the Thom space. Thus the obstructions to embeddability live in cohomology groups $H^{i+1}(N;\pi_i MSO_k)$, while the obstructions to immersability live in $H^{i+1}(N;\pi_i QMSO_k)$. If we want to find something immersed but not embedded, we want an obstruction in the kernel of the homomorphism
\[
 H^{i+1}(N;\pi_i MSO_k) \to H^{i+1}(N;\pi_i QMSO_k) = H^{i+1}(N;\pi_i^S MSO_k)
\]
induced by stabilisation on the coefficients. According to Pastor \cite[Theorems 3.2 and 4.3]{Pastor}, if $k+1\equiv 0(4)$ and neither $k+1$ nor $k+2$ is a power of $2$, then the stabilisation map for $i=2k+1$ fits into a short exact sequence
\[
\xymatrix{
0 \ar[r] & \Z_2 \ar[r] & \pi_{2k+1} MSO_k \ar[r] & \pi_{2k+1} QMSO_k \ar[r] & 0,
}
\]
and $\pi_{2k+1} QMSO_k \cong \Omega_{k+1}$ is the $(k+1)$-st oriented cobordism group. We are therefore interested in such values of $k$, the smallest of which is $k=11$. In this case we have
\[
\xymatrix{
0 \ar[r] & \Z_2 \ar[r] & \pi_{23} MSO_{11} \ar[r] & \Omega_{12}=\Z^3 \ar[r] & 0,
}
\]
showing that $\pi_{23} MSO_{11}\cong \Z_2\oplus \Z^3$. We therefore analysed the Postnikov tower of the space $M:=MSO_{11}$ in the appropriate range, which takes the form
\begin{equation}\label{towerM}
\xymatrix{
 & M[22] \ar[rr]^-{k_{24}^M} \ar[d] && K(\Z_2\oplus\Z^3,24)\\
 & M[21]  \ar[rr]^-{0} \ar[d] && K(\Z_2,23)\\
 & M[20] \ar[rr]^-{0} \ar[d] && K(\Z_2,22) \\
 & M[19] \ar[d] \ar[rr]^-{0} && K(\Z_2\oplus\Z_2,21) \\
 & M[16] \ar[d] \ar[rr]^-{(\beta P^1_5,\frac13\beta P^2_3)} && K(\Z\oplus\Z,20)\\
 & M[15] \ar[d] \ar[rr]^-{0}&& K(\Z_2,17) \\
M \ar[r]^-{t} & K(\Z,11) \ar[rr]^-{\beta P^1_3} && K(\Z,16).
}
\end{equation}
We then found an explicit lift $q[22]\colon M\to M[22]$ giving a $22$-equivalence, and analysed its kernel in mod $2$ cohomology, which (after some lengthy but elementary calculations assisted by Sage) led to the obstruction described above. 
\end{rem}

We now define the manifolds $N$ appearing in Theorem \ref{thm2}. 
Consider the homotopy long exact sequence of the frame bundle of $S^{11}$, $SO_{11} \to SO_{12} \xra{p} S^{11}$:
\[
\xymatrix{
\cdots \ar[r] & \pi_{12}(SO_{11}) \ar[r] & \pi_{12}(SO_{12}) \ar[r]^-{\pi_{12}(p)} & \pi_{12}(S^{11}) \ar[r] & \cdots
}
\]
Kervaire \cite{Kervaire} has calculated $\pi_{12}(SO_{11})\cong \Z_2$ and $\pi_{12}(SO_{12})\cong \Z_2\oplus\Z_2$, using results of Paechter \cite{Paechter}. It follows that there are two elements $\xi \in \pi_{12}(SO_{12})$ such that 
$\pi_{12}(p)(\xi)= \eta$ is a generator of  $\pi_{12}(S^{11}) = \Z_2$.
The elements $\xi$ are clutching functions of rank $12$ vector bundles over $S^{13}$, 
which we also denote by $\xi$, 
and we define the $24$-manifolds $N$ to be the total space of the unit sphere bundles of $\xi$.

The manifolds $N$ were considered by Ishimoto \cite{Ishimoto} as part of his classification of $(n{-}2)$-connected $2n$-manifolds with torsion-free homology up to the action of homotopy $2n$-spheres.
It turns out that both total spaces with $\pi_{12}(p) = \xi$ are diffeomorphic, mod homotopy $24$-spheres,
and this almost diffeomorphism class appears in row ``$4t$ ($t$ odd)" and column ``Type I" of Table 2 on page 213 of \cite{Ishimoto}. 
According to James and Whitehead \cite{JW}, the 
total spaces $N$ have cell structures
\[
N\simeq (S^{11}\cup_\eta e^{13}) \cup e^{24}.
\]
Theorem \ref{thm2} is now restated as follows.

\begin{thm} \label{t:24ibne}
Each generator $ z \in H_{13}(N)$ is immersed but not embedded.
\end{thm}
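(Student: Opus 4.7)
The plan rests on two observations. First, the cell structure $N \simeq (S^{11} \cup_\eta e^{13}) \cup e^{24}$ forces $H^i(N;\Z_2) = 0$ for $i \in \{15, 16, 19, 20\}$. Second, $N$ is stably parallelizable: the bundle $\xi$ is stably trivial since $\pi_{12}(SO) = 0$ by Bott periodicity, and the standard decomposition $TN \oplus \epsilon \cong \pi^*\xi \oplus \pi^*TS^{13}$ together with the stable triviality of $TS^{13}$ then shows $TN$ is stably trivial. Consequently, all Stiefel--Whitney classes of $N$ vanish, and hence (by the identity $Sq(v)=w$) so do all the Wu classes $v_i(N)$.

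To show $z$ is not embedded, I would apply Theorem~\ref{thm3}. The vanishing of $H^{15}, H^{16}, H^{19}, H^{20}$ forces $\alpha, \beta, \beta', \gamma, \delta$ to be zero, so the left-hand side of the equation in Theorem~\ref{thm3} is zero. The hypothesis that $Sq^2 \colon H^{11}(N;\Z_2) \to H^{13}(N;\Z_2)$ is nonzero makes $Sq^2 x$ a generator of $H^{13}(N;\Z_2)$, and then Poincar\'e duality modulo $2$ makes $x \cdot Sq^2 x$ the nonzero generator of $H^{24}(N;\Z_2)$. Wu's formula applied to $Sq^2 x$ in the $24$-manifold $N$ yields $Sq^{11}(Sq^2 x) = v_{11}(N) \cdot Sq^2 x = 0$, so the right-hand side is nonzero, yielding the required contradiction.

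To show $z$ is immersed, the plan parallels the proof of Proposition~\ref{Sp2}: construct a stable map $N_+ \rightsquigarrow MSO_{11}$ pulling back the universal Thom class $t_{11}$ to $x$. Since $N$ is closed and stably parallelizable, a Pontryagin--Thom argument (embed $N$ in Euclidean space with trivial normal bundle and compose the collapse map with the projection of $N_+$ onto the top $24$-cell) produces a stable splitting $\Sigma^\infty N_+ \simeq \Sigma^\infty K'_+ \vee S^{24}$, where $K' = S^{11} \cup_\eta e^{13}$ is the $13$-skeleton. Next, identify $K' \simeq \Sigma^9 \C P^2 \simeq \Th(\gamma_1 \oplus 9\epsilon)$, where $\gamma_1 \to \C P^1 = S^2$ is the tautological complex line bundle viewed as an oriented rank-$2$ real bundle, so that $\gamma_1 \oplus 9\epsilon$ is an oriented rank-$11$ real vector bundle over $S^2$. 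Thomifying its classifying map $S^2 \to BSO_{11}$ produces $f_{K'} \colon K' \to MSO_{11}$ pulling $t_{11}$ back to the Thom class of $\gamma_1 \oplus 9\epsilon$, which generates $\tilde H^{11}(K') \cong \Z$ and coincides (up to sign) with $x|_{K'}$. Combining $f_{K'}$ with the zero map on the $S^{24}$ summand yields the desired stable map.

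The main obstacle is the Pontryagin--Thom degree-$1$ computation justifying the stable splitting of the top cell for a stably parallelizable manifold. While this is standard, the stable bookkeeping warrants careful handling; by contrast, the remaining ingredients (the cohomology computations for the obstruction and the Thom-space identification of $K'$) are direct consequences of the cell structure and Bott periodicity.
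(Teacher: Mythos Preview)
Your proposal is correct and follows essentially the same strategy as the paper's proof: both parts rely on Theorem~\ref{thm3} for non-embeddability and on stable parallelizability plus the top-cell splitting plus identification of the $13$-skeleton as a Thom space for immersibility. The only differences are cosmetic: you kill $Sq^{11}Sq^2 x$ via Wu's formula and $v_{11}(N)=0$, whereas the paper uses the Adem relation $Sq^{11}=Sq^1Sq^{10}$ together with $H^{23}(N;\Z_2)=0$; and your argument for stable parallelizability via $TN\oplus\eps\cong\pi^*\xi\oplus\pi^*TS^{13}$ is a slight variant of the paper's chain of embeddings $S(\xi)\hookrightarrow S(\xi\oplus\R^\ell)$.
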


\begin{proof}
We first show that $z$ is not embedded, using Theorem \ref{thm3}. Let $x=PD(z)\in H^{11}(N)$. Since the inclusion of the $13$-skeleton of $N$ induces isomorphisms of cohomology in degrees less than $24$, and since $Sq^2$ detects $\eta$, one sees that $Sq^2(x)\in H^{13}(N;\Z_2)\cong\Z_2$ is the generator. By Poincar\'e duality, $xSq^2 x\neq 0$. The Adem relation $Sq^1Sq^{10}=Sq^{11}$ and the fact that $H^{23}(N;\Z_2)=0$ then gives
\[
xSq^2 x + Sq^{11}Sq^2 x = x Sq^2 x \neq 0.
\]
However, the cohomology groups $H^{20}(N;\Z_2)$, $H^{19}(N)$, $H^{16}(N;\Z_2)$ and $H^{15}(N)$ are all zero, hence the left-hand side of the expression in Theorem \ref{thm3} is always zero, contradicting the existence of the classes $\alpha, \ldots , \delta$.

Finally we show that $z$ is immersed, by a similar argument as in the $\symp$ case. First, note that $N$ is stably parallelizable: Since $\pi_{12}(SO)=0$, each bundle $\xi$ is stably trivial. Consider the sequence of embeddings 
\[
\begin{tikzcd}        
    N=S(\xi) \arrow[hook]{r} & S(\xi\oplus\R)   \arrow[hook]{r} &  S(\xi\oplus\R^2) \arrow[hook]{r} &\cdots  \arrow[hook]{r} & S(\xi\oplus\R^\ell), 
\end{tikzcd}
\]
all with trivial normal bundle. For $\ell\gg0$ the manifold $S(\xi\oplus \R^\ell)\cong S^{13}\times S^{11+\ell}$ is stably parallelizable, hence so is $N$.

 It follows that the top cell of $N$ splits off stably to give
\[
\Sigma^\ell N \simeq \Sigma^\ell K \vee S^{24+\ell}
\]
for $\ell\gg0$, where $K=S^{11}\cup_\eta e^{13}$ denotes a $13$-skeleton of $N$.  As in the previous case, since $\eta$ is in the image of the $J$-homomorphism $J\colon\pi_1(SO_{11})\to \pi_{12}(S^{11})$ we find that $K$ is the Thom complex of the non-trivial rank $11$ vector bundle $\zeta$ over $S^2$, with Thom class $t(\zeta)=x|_K$. As before we conclude the existence of a stable map $f\colon N_+\rightsquigarrow MSO_{11}$ such that $f^*t=x$.
\end{proof}


We next give an alternative proof of the immersability of $z\in H_{13}(N)$ by analyzing the Postnikov tower of $QMSO_{11}$ using results of Pastor \cite{Pastor} and Trou\'e \cite{Troue}.
Let $\iota\colon MSO_k\to QMSO_k$ denote the stabilisation map. The long exact sequence \cite[(2.4)]{Pastor}, which is valid for 
$n<2k{-}1$, may be written as
\[
\xymatrix{
\cdots \ar[r] & \Omega_{n-k+1}^{w(k)} \ar[r] & \pi_{n+k}MSO_k \ar[r]^{\iota_*} & \pi_{n+k}QMSO_k \ar[r]^-{D} & \Omega^{w(k)}_{n-k} \ar[r] & \cdots~,
}
\] 
where $\Omega_*^{w(k)}$ are certain bordism groups whose values are known for $*\leq 2$, and $D$ is represented by taking double points. (Recall that the groups $\pi_{n+k}MSO_k$ and $\pi_{n+k}QMSO_k$ correspond to bordism class of embeddings $M^{n}\hookrightarrow \R^{n+k}$ and immersions $M^n\imm \R^{n+k}$, repsectively.) It is immediate that $\iota_*$ is an isomorphism for $n<k{-}1$ and an epimorphism for $n=k{-}1$. It is also an isomorphism for $n=k{-}1$, since the previous map in the sequence
\[
D\colon \pi_{2k}QMSO_k \to \Omega_0^{w(k)}
\]
admits a splitting $\partial\colon \Omega_0^{w(k)}\to \pi_{2k}QMSO_k$ given by an immersion $j_k\colon S^k\imm \R^{2k}$ with a single double point (see \cite[Theorem 3.1]
{Pastor}). For $n=k$ and specialising to $k\equiv 3(4)$, since $\Omega_1^{w(k)}=0$ and $\pi_{2k}MSO_k\cong \Omega_k$ \cite[Theorem 4.2]{Pastor} there is a split short exact sequence
\[
\xymatrix{
0 \ar[r] & \pi_{2k}MSO_k \ar[r]^{\iota_*} \ar@{=}[d] & \pi_{2k}QMSO_k \ar[r]^-{D} \ar@{=}[d] & \Z_2 \ar@{=}[d] \ar[r] & 0\\
0 \ar[r] & \Omega_{k}\ar[r] & \Omega_{k}\oplus\Z_2 \ar[r]  & \Z_2 \ar[r] & 0.
}
\]
Summarising, for all $k$ the map $\iota\colon MSO_k\to QMSO_k$ induces an isomorphism on homotopy groups in degrees less than $2k$, and if in addition $k\equiv 3(4)$ it induces a split monomorphism in degree $2k$. 

We now specialise to $k=11$, and apply naturality of Postnikov towers as decribed for example in \cite[Theorem 2.1]{Kahn}. The space $Q:=QMSO_{11}$ admits a Postnikov tower such that $\iota\colon M\to Q$ induces equivalences of Postnikov sections with the tower for $M$ displayed as (\ref{towerM}) in the last section, up to $\iota[21]\colon M[21]\to Q[21]$. Further, the $k$-invariant $k^Q_{23}\in H^{23}(Q[21];\pi_{22}(Q))$ satisfies
\[
\iota[21]^*k^Q_{23} = \pi_{22}(\iota)_* k^M_{23}=\pi_{22}(\iota)_* 0=0 \in H^{23}(M[21];\pi_{22}(M)),
\]
hence $k^Q_{23}=0$. Hence there is a homotopy equivalence 
$Q[22]\simeq M[22]\times K(\Z_2,22)$, with the map $\iota[22]\colon M[22]\to Q[22]$ given by inclusion of a factor.  The next homotopy group is $\pi_{23}(Q)\cong \Omega_{12}\cong \Z^3$ by \cite[Theorem 3.2]{Pastor}, and the induced map $\pi_{23}(\iota)\colon\pi_{23}(M)\to \pi_{23}(Q)$ is the split surjection $\Z_2\oplus\Z^3\to \Z^3$. The situation is summarized in the following diagram:
\[
\xymatrix{
K(\Z^3,24) &&Q[22] \ar[ll]_-{k_{24}^Q} \ar[d] &   M[22] \ar[rr]^-{k_{24}^M} \ar[l]_{\iota[22]}\ar[d] && K(\Z_2\oplus\Z^3,24)\\
K(\Z_2\oplus\Z_2,23) &&Q[21] \ar[ll]_-{k_{23}^Q=0} \ar[d]  & M[21]  \ar[rr]^-{k_{23}^M=0} \ar[l]_{\iota[21]}^{\simeq}\ar[d] && K(\Z_2,23)\\
&& \vdots  & \vdots &&  
}
\]

We now wish to understand the next $k$-invariant 
\[
k^Q_{24}\in H^{24}(Q[22];\pi_{23}(Q)). 
\]
By the K\"unneth formula and the fact that $H^{24}(K(\Z_2,22))=0$, the inclusion $\iota[22]\colon M[22]\to Q[22]\simeq M[22]\times K(\Z_2,22)$ induces an isomorphism 
\[
\xymatrix{
\iota[22]^*: H^{24}(Q[22];\Z^3) \ar[r]^-{\cong} & H^{24}(M[22];\Z^3).
}
\]
 Note that again by \cite[Theorem 2.1]{Kahn}, we have
\[
\iota[22]^*(k^Q_{24})=\pi_{23}(\iota)_*(k^M_{24})\in H^{24}(M[22];\Z^3)
\]
and so the orders of the components of $k^Q_{24}$ equal the orders of the components in integral cohomology of $k^M_{24}$. Now for $\ell\gg0$ set $\Omega:=\Omega^\ell MSO(11+\ell)$. Note that 
\[
\pi_i(\Omega)= \pi_{i+\ell}(MSO(11+\ell)) = \pi_{i-11}(MSO)
\]
for $\ell$ sufficiently large. Set $f\colon M\to \Omega$ to be the adjoint of 
the natural map $\Sigma^\ell MSO_{11}\to MSO(11{+}\ell)$; by \cite{Pastor} (for example) the map $\pi_i(f)\colon \pi_i(M)\to \pi_i(\Omega)$ is an isomorphism for $i<23$ and an epimorphism for $i=23$. In the induced map of Postnikov towers the map $f[22]\colon M[22]\to \Omega[22]$ is a homotopy equivalence, and $\pi_{23}(f)_*(k^M_{24})=f[22]^*(k^\Omega_{24})$, and so the orders of the integral components of $k^M_{24}$ equal the orders of the corresponding components of $k^\Omega_{24}$. However, the $k$-invariants of $\Omega$ are obtained from the $k$-invariants of $MSO(11+\ell)$ by applying the cohomology suspension homomorphism $\ell$ times. Therefore their orders divide the orders of the corresponding $k$-invariants for the spectrum $MSO$, which were calculated by Trou\'e \cite{Troue}; in  dimension $12$ the $k$-invariants for $MSO$ have orders $27$, $15$ and $7$. 

By the above analysis and standard obstruction-theoretic arguments, we have the following generalisation of the immersability statement of Theorem \ref{t:24ibne}.

\begin{prop}\label{kQ}
Let $N$ be a closed oriented manifold of dimension $24$, and let $z\in H_{13}(N)$ be a homology class with Poincar\'e dual $x\in H^{11}(N)$. If $\beta P_3^1(x)$, $\beta P_3^2(x)$ and $\beta P_5^1(x)$ all vanish, then $z$ is immersed.
\end{prop}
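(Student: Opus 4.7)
My plan is to recast ``$z$ is immersed'' as a lifting problem and apply obstruction theory to the Postnikov tower of $Q:=QMSO_{11}$ just analyzed. By Section~\ref{ss:bg}, $z$ is immersed if and only if the Poincar\'e dual map $x\colon N_+\to K(\Z,11)$ admits a homotopy lift through the universal Thom class for immersions $\tilde t_{11}\colon Q\to K(\Z,11)$. Such a lift is built stage by stage through the Postnikov tower of $Q$, and at each stage the obstruction is the pullback of the corresponding $k$-invariant of $Q$.

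I would first dispose of the low stages. Through $Q[22]$, the tower of $Q$ agrees with the tower~\eqref{towerM} of $M=MSO_{11}$ (up to an additional, trivially-attached $K(\Z_2,22)$ factor). Tracing up, the only nonzero $k$-invariants in this range are $\beta P^1_3$ in degree $16$ and $(\beta P^1_5,\tfrac13\beta P^2_3)$ in degree $20$. Their pullbacks under $x$ and its successive lifts are governed by $\beta P^1_3(x)$, $\beta P^1_5(x)$ and $\beta P^2_3(x)$, each of which vanishes by hypothesis; for the ``$\tfrac13\beta P^2_3$'' component, the vanishing of $\beta P^2_3(x)$ renders its pullback (a $3$-torsion class whose triple is $\beta P^2_3(x)$) trivial after absorbing the indeterminacy of the lift to $Q[16]$. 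Hence $x$ lifts to $Q[22]$.

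The main obstacle, I expect, is the top $k$-invariant $k^Q_{24}\in H^{24}(Q[22];\pi_{23}(Q))=H^{24}(Q[22];\Z^3)$. Here I would appeal to the computation in the paragraphs preceding the proposition: the three integer components of $k^Q_{24}$ have orders dividing $27$, $15$ and $7$ respectively, by comparison (via $f[22]$ and cohomology suspension) with Trou\'e's~\cite{Troue} computation of the $k$-invariants of the $MSO$-spectrum in dimension~$12$. Their pullbacks to $N$ land in the torsion-free group $H^{24}(N;\Z)\cong\Z$, where any torsion class is zero; thus $\bar x^*(k^Q_{24})=0$.

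Finally, all further obstructions lie in $H^{i}(N;-)$ with $i>24=\dim N$ and therefore vanish for dimensional reasons. Combining these vanishings produces a lift of $x$ through $Q$, proving that $z$ is immersed.
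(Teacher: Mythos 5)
Your argument is essentially the paper's own: Proposition~\ref{kQ} is deduced there from the preceding Postnikov-tower analysis ``by standard obstruction-theoretic arguments'', which is exactly the lifting argument you spell out --- the hypotheses kill the pullbacks of the $k$-invariants $\beta P^1_3$ and $(\beta P^1_5,\tfrac13\beta P^2_3)$ in degrees $16$ and $20$, the components of $k^Q_{24}$ are torsion of orders dividing $27$, $15$ and $7$ and hence pull back to zero in the torsion-free group $H^{24}(N)\cong\Z$, and all higher obstructions vanish since $\dim N=24$. The only point where you add an unproved assertion is the parenthetical handling of the $\tfrac13\beta P^2_3$ component (a class killed by $3$, to be removed by re-choosing the lift through $Q[16]$); this is precisely the step the paper also leaves implicit, so your write-up is at the same level of detail as, and follows the same route as, the paper's.
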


It is known that the $k$-invariants of the spectrum $MSO$ are all (odd) torsion. Outside of the stable range, however, very little is known about the Postnikov towers of the spaces $MSO_k$ and $QMSO_k$. In particular, we do not know if the $k$-invariants are all torsion, which prevents us from deducing that every homology class in a manifold with torsion-free homology must be embedded or immersed.

\section{Proof of Theorem D}\label{S:SRnotim}
In this section we present examples of integral homology classes in manifolds which are Steenrod representable but not immersed. That such examples should exist is not surprising, after the work of 
the second author and Sz\H{u}cs \cite{GSz}, which shows that there are mod $2$ homology classes which are not immersed, despite every mod $2$ class being Steenrod representable. We include such examples here to illustrate that an extra argument is needed to conclude that  the generators of $H_7(\symp)$ are immersed.

To begin, observe that if $z\in H_*(M)$ is immersed then so is its mod $2$ reduction $\rho_2(z)\in H_*(M;\Z_2)$, by forgetting orientation. Thus our strategy is to look for an integral class whose mod $2$ reduction is not immersed, for which we can apply the obstructions of \cite{GSz}.

We recall some notation around the mod $2$ Steenrod algebra. For a sequence of non-negative integers $I=(i_1,\ldots , i_r)$ we write $Sq^I=Sq^{i_1}\cdots Sq^{i_r}$. We say that $I$ is \emph{admissible} if $i_j\ge 2i_{j+1}$ for $j=1,\ldots , r$ (where we understand $i_{r+1}=0$). The \emph{excess} of $I$ is defined by $e(I)=\sum_{j=1}^r (i_j-2i_{j+1})$. Observe that if $x\in H^k(X;\Z_2)$ and $I$ is of excess $k$, then $i_1=k+\sum_{j=2}^r i_j$ and therefore
\[
Sq^I(x) = Sq^{i_1}\big( Sq^{i_2}\cdots Sq^{i_r}(x)\big) = \big( Sq^{i_2}\cdots Sq^{i_r}(x)\big)^2.
\]
A classical theorem of J.-P.\ Serre \cite{Serre}
describes the mod $2$ cohomology algebra of $\Z_2$-Eilenberg--MacLane spaces as
\[
H^*(K(\Z_2,k);\Z_2) \cong \Z_2 [Sq^I(\iota_k) \mid I\mbox{ admissible of excess less than }k],
\]
where $\iota_k\in H^k(K(\Z_2,k);\Z_2)$ is the fundamental class.

\begin{thm}[{\cite[Theorem 1.2]{GSz}}]
Let $k>1$, and let $I$ have excess $e(I)=k$. If $z\in H_{n-k}(N;\Z_2)$ is an immersed homology class with Poincar\'e dual $x=PD(z)\in H^k(N;\Z_2)$, then $Sq^I(x)$ is the reduction of an integral class. In other words, if $\beta_2 Sq^I(x)\neq 0$ where $\beta_2$ denotes the 
integral Bockstein operator, then $z$ is not immersed.
\end{thm}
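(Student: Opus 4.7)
The plan is to lift the problem to the universal example and then exploit the excess condition. Since $z\in H_{n-k}(N;\Z_2)$ is immersed, represented by a (possibly non-orientable) closed manifold $M^{n-k}$, the unoriented analogue of the Wells--Rourke--Sanderson framework recalled in Section~\ref{ss:bg} (with $MO_k$, the Thom spectrum space for the unoriented case, in place of $MSO_k$) produces, for some $\ell\gg 0$, a stable map $\varphi\colon \Sigma^\ell N_+\to \Sigma^\ell MO_k$ such that $\varphi^*(\Sigma^\ell t_k)=\Sigma^\ell x$, where $t_k\in \tilde H^k(MO_k;\Z_2)$ is the mod $2$ Thom class. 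Both $Sq^I$ and the integral Bockstein $\beta_2$ are stable cohomology operations, hence commute with $\varphi^*$; so it suffices to show that $Sq^I(t_k)\in \tilde H^{k+|I|}(MO_k;\Z_2)$ is the mod $2$ reduction of an integral class.

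The excess hypothesis now enters. By the identity recalled just before the theorem statement, $Sq^I(t_k)=y^2$ where $y:=Sq^{i_2}\cdots Sq^{i_r}(t_k)$. The Cartan formula gives
\[
Sq^1(y^2) = 2y\cdot Sq^1(y) = 0 \in \tilde H^{2|y|+1}(MO_k;\Z_2),
\]
and since $Sq^1=\rho_2\circ \beta_2$ this forces $\beta_2(y^2)\in \tilde H^{2|y|+1}(MO_k;\Z)$ to lie in $\ker(\rho_2)=2\cdot \tilde H^*(MO_k;\Z)$.

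To close the argument, one invokes the classical fact that all torsion in $\tilde H^*(MO_k;\Z)$ has exponent~$2$, which can be deduced stably from the torsion structure of the Thom spectrum $MO$ underlying Thom's computation of unoriented bordism, or unstably via the Thom isomorphism from the known torsion of $H^*(BO_k;\Z)$. Under this hypothesis, the image of multiplication by $2$ intersects the $2$-torsion subgroup of $\tilde H^*(MO_k;\Z)$ only at zero; since $\beta_2$ always lands in $2$-torsion, combining with the previous paragraph gives $\beta_2(y^2)=0$. Pulling back by $\varphi^*$ yields $\beta_2 Sq^I(x)=0$, equivalently $Sq^I(x)$ is a mod $2$ reduction, as claimed. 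The main obstacle is precisely this final torsion input; passing to the stable range (legitimate because $\varphi$ is a stable map) reduces it to the well-known exponent-$2$ torsion structure of the spectrum $MO$.
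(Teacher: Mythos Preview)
The paper does not supply its own proof of this statement; it is quoted verbatim from \cite[Theorem 1.2]{GSz}. So there is nothing in the paper to compare against, and your argument stands or falls on its own. Overall it is correct and is presumably close in spirit to the proof in \cite{GSz}: reduce to the universal class $t_k\in\tilde H^k(MO_k;\Z_2)$, use the excess hypothesis to write $Sq^I(t_k)$ as a square so that $Sq^1$ annihilates it, and then invoke the torsion structure of $\tilde H^*(MO_k;\Z)$ to upgrade $Sq^1=0$ to $\beta_2=0$.

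Two points deserve tightening. First, your ``stable'' justification for the exponent-$2$ torsion does not apply here: since $e(I)=k$ forces $|I|\ge k$, the class $Sq^I(t_k)$ lives in degree $\ge 2k$, which is outside the range where the space $MO_k$ agrees with the spectrum $MO$. The fact that $\varphi$ is a stable \emph{map} does not let you replace the (reduced) cohomology of the \emph{space} $MO_k$ by that of the spectrum. Second, your unstable route via the Thom isomorphism is the right one, but note that for the unoriented bundle $\gamma_k$ the integral Thom isomorphism reads $\tilde H^*(MO_k;\Z)\cong H^{*-k}(BO_k;\tilde\Z)$ with \emph{twisted} coefficients, not $H^*(BO_k;\Z)$. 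This is easily handled: the double cover $BSO_k\to BO_k$ gives a splitting $H^*(BSO_k;\Z)\cong H^*(BO_k;\Z)\oplus H^*(BO_k;\tilde\Z)$, so the exponent-$2$ torsion of $H^*(BSO_k;\Z)$ (Brown, Feshbach) yields the same for $H^{*}(BO_k;\tilde\Z)$ and hence for $\tilde H^*(MO_k;\Z)$. With these clarifications your proof is complete.
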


For this to give useful obstructions to immersability, it must be shown that $\beta_2 Sq^I(\iota_k)\neq 0$ for some sequences $I$ of excess $k$, where $\iota_k\in H^k(K(\Z_2,k);\Z_2)$ is the fundamental class. This is achieved in \cite[Section 3]{GSz} by referencing Browder's results \cite{Browder} on the mod $p$ Bockstein spectral sequence for $K(\Z_{p^s},k)$. In particular, we have the following.

\begin{prop}[{\cite[Theorem 3.2]{GSz}, corollary to \cite[Theorem 5.5]{Browder}}]
Let $k>1$, and let $J$ be an admissible sequence of excess less than $k$ such that $j_1\neq 1$ and $|J|+k$ is even. Then $(Sq^J\iota_k)^2\in H^{2|J|+2k}(K(\Z_2,k);\Z_2)$ is not the reduction of an integral class.
\end{prop}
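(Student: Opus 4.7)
The plan is to deduce the proposition from Browder's Theorem 5.5 in \cite{Browder}, which gives the complete structure of the mod $2$ Bockstein spectral sequence (BSS) of $K(\Z_2,k)$. The standard translation is: a class $x \in H^n(X;\Z_2)$ is the reduction of an integral class if and only if its integral Bockstein $\beta_2 x \in H^{n+1}(X;\Z)$ vanishes. Since $H^*(K(\Z_2,k);\Z)$ is finitely generated in each degree and all $2$-torsion in positive degrees, this is equivalent to $x$ being a $d_r$-cycle in the BSS $(E_r,d_r)$ for all $r \geq 1$, where $E_1 = H^*(K(\Z_2,k);\Z_2)$, $d_1 = Sq^1$, and $d_r$ is the $r$-th Bockstein. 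Hence it suffices to exhibit some $r$ with $d_r\bigl((Sq^J\iota_k)^2\bigr) \neq 0$.

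Set $y := Sq^J\iota_k$. By the Cartan formula, $Sq^1(y^2) = 2y \cdot Sq^1 y \equiv 0 \pmod 2$, so $d_1(y^2) = 0$ and one must access a higher Bockstein. This is provided by Browder's theorem, which assigns each admissible polynomial generator $Sq^I\iota_k$ (with $e(I)<k$) a Bockstein height and, via a Cartan-style identity for higher Bocksteins of squares, computes the first nontrivial differential on squares of such generators. Concretely, for an even-degree class $y$ possessing a $\Z_4$-lift $\tilde y$, the derivation property yields $\beta_4(\tilde y^2) = 2\tilde y \cdot \beta_4(\tilde y)$, whose reduction mod $2$ realises $d_2(y^2)$, up to corrections of lower Bockstein height, as a multiple of the product $y \cdot \beta_2(y)$ in $E_2 = \ker(Sq^1)/\operatorname{im}(Sq^1)$.

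The hypotheses of the proposition are calibrated exactly so that Browder's theorem delivers a nonvanishing $d_2(y^2)$: the parity condition $|J|+k$ even places $y$ in even degree so that the squaring formula applies, and the condition $j_1 \neq 1$ ensures $y$ has positive Bockstein height (so that $\beta_2 y$ represents a nonzero class in $E_2$) and is not itself in the image of $Sq^1$ (so that the product $y \cdot \beta_2(y)$ does not collapse for trivial reasons). The main obstacle, and the technical heart of Browder's theorem, is verifying that this product represents a nonzero class in $E_2$; this reduces to a combinatorial check using Adem relations in Serre's polynomial basis for $H^*(K(\Z_2,k);\Z_2)$. Once Browder's theorem is invoked, the proposition follows by specialising to admissible monomials of the form $(Sq^J\iota_k)^2$.
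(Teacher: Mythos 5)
The paper itself offers no proof of this Proposition: it is imported verbatim from \cite[Theorem 3.2]{GSz}, where it is deduced from Browder's description of the mod $2$ Bockstein spectral sequence of $K(\Z_2,k)$, so your overall strategy (show that $y^2$, $y=Sq^J\iota_k$, supports a nonzero higher Bockstein, using the easy fact that reductions of integral classes are permanent cycles) is indeed the intended route. However, two of your bridging claims are incorrect. First, your formula for the secondary Bockstein of a square is wrong as stated: at $p=2$ the correct identity is $d_2[y^2]=[\,y\,Sq^1y+Sq^{|y|}Sq^1y\,]$, and the extra term is not a ``correction of lower Bockstein height'' but is needed for the expression to define a class in $E_2$ at all, since $Sq^1(y\,Sq^1y)=(Sq^1y)^2\neq 0$ in general, so $y\,Sq^1y$ alone is not even a $Sq^1$-cocycle.

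Second, and more seriously, your assertion that $j_1\neq 1$ guarantees $\beta_2y=Sq^1y\neq 0$ and $y\notin\operatorname{im}Sq^1$ is false. If $j_1$ is odd (and $\geq 3$), the Adem relation $Sq^1Sq^{j_1}=0$ gives $Sq^1y=0$, and $y=Sq^1(u)$ with $u=Sq^{j_1-1}Sq^{j_2}\cdots Sq^{j_r}\iota_k$ (admissible since $j_1>2j_2$), so your proposed differential $d_2(y^2)=y\cdot\beta_2(y)$ vanishes. Worse, $Sq^1(uy)=(Sq^1u)y+u\,Sq^1y=y^2$, so $y^2=\rho_2\tilde\beta(uy)$ \emph{is} the reduction of an integral class: for instance $J=(3,1)$, $k=4$ satisfies every hypothesis in the statement ($j_1=3\neq 1$, excess $2<4$, $|J|+k=8$ even), yet $(Sq^3Sq^1\iota_4)^2=Sq^1\bigl(Sq^2Sq^1\iota_4\cdot Sq^3Sq^1\iota_4\bigr)$. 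So the mechanism you describe can only work when $j_1$ is even, equivalently $Sq^1Sq^J\iota_k\neq0$, equivalently $Sq^J\iota_k\notin\operatorname{im}Sq^1$ (this does hold in the only case the paper uses, $J=(2,1)$, $k=3$, and suggests the hypothesis ``$j_1\neq1$'' should be read as ``$j_1$ even''). Finally, the real content---that $y\,Sq^1y+Sq^{|y|}Sq^1y$ is nonzero in $E_2$---is exactly what Browder's Theorem 5.5 supplies; relying on it is legitimate since the Proposition is stated as a corollary of that theorem, but your account of how the hypotheses feed into it needs the corrections above rather than the calibration you describe.
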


For example, when $k=3$ we may take $J=(2,1)$, of excess $1$. The result in this case tells us that $\beta_2 Sq^6Sq^2Sq^1\iota_3\neq 0$. If $K(\Z_2,3)$ were a manifold, this would tell us that the homology class Poincar\'e dual to $\iota_3$ is not immersed. Note that it would also tell us that the class dual to $Sq^1(\iota_3)$ is not immersed, since $(6,2)$ is admissible of excess $4$. 

We are now in a position to prove Theorem~\ref{thm4} from the Introduction.

\begin{thm}
For all $n \geq 27$, there exist closed oriented $n$-manifolds $N$ and $2$-torsion classes $z \in H_{n-4}(N)$,
which are Steenrod representable but not immersed.
\end{thm}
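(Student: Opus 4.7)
The plan is to exhibit a 2-torsion integer homology class whose mod 2 reduction fails the immersibility obstruction of \cite[Theorem 1.2]{GSz}, taking advantage of the fact that 2-torsion integer homology classes are automatically Steenrod representable. Suppose $z \in H_{n-4}(N;\Z)$ is 2-torsion and let $y = PD(z) \in H^4(N;\Z)$. By exactness of the integer Bockstein sequence, $y = \beta_2 u$ for some $u \in H^3(N;\Z_2)$, so $x := \rho_2 y = Sq^1 u$. Applying \cite[Theorem 1.2]{GSz} with $k = 4$ and $I = (6, 2)$ (admissible of excess $4$): if $\rho_2(z)$ were immersed, then $Sq^6 Sq^2 x = Sq^6 Sq^2 Sq^1 u$ would lie in the image of $\rho_2$, equivalently $\beta_2 Sq^6 Sq^2 Sq^1 u = 0$ in $H^{13}(N; \Z)$. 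The proposition just above, applied with $k = 3$ and $J = (2, 1)$ (admissible of excess $1 < 3$, $j_1 = 2 \neq 1$, $|J|+k = 6$ even), yields the universal non-vanishing $\beta_2 Sq^6 Sq^2 Sq^1 \iota_3 \neq 0$ in $H^{13}(K(\Z_2, 3); \Z)$.

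The problem therefore reduces to realizing this obstruction in a closed oriented 27-manifold. I would take $P = K(\Z_2, 3)^{(13)}$, the 13-skeleton: by the cellular long exact sequence, the inclusion $P \hookrightarrow K(\Z_2, 3)$ induces an injection $H^{13}(K(\Z_2, 3); \Z) \hookrightarrow H^{13}(P; \Z)$, so the universal obstruction restricts to a nonzero class on $P$. Embed $P \hookrightarrow \R^{28}$ by general position (easy since $\dim P = 13$), and let $N^{27} = \partial \bar\nu(P)$ be the boundary of a closed regular neighborhood---a closed oriented $27$-manifold with deformation retraction $r \colon N \to \bar\nu \simeq P$. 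Setting $u = r^* \iota_3 \in H^3(N;\Z_2)$, $y = \beta_2 u \in H^4(N;\Z)$ and $z = PD(y) \in H_{23}(N;\Z)$ yields a 2-torsion class with $\beta_2 Sq^6 Sq^2 Sq^1 u \neq 0$; so $\rho_2 z$, and hence $z$, is not immersed. For $n > 27$, take $N_n = N^{27} \times S^{n-27}$ and $z_n = z \times [S^{n-27}]$; by K\"unneth the obstruction $\beta_2 Sq^6 Sq^2 Sq^1 (u \times 1) = \beta_2 Sq^6 Sq^2 Sq^1 u \times 1$ persists.

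That $z$ is Steenrod representable is standard for 2-torsion integer classes: by Thom, some $kz = f_*[M]$ is Steenrod representable; the 2-local wedge splitting of $MSO$ as a sum of Eilenberg--MacLane spectra makes the Thom homomorphism $MSO_*(N)_{(2)}\to H_*(N;\Z)_{(2)}$ split surjective, which (combined with a B\'ezout argument using $pk \equiv 1 \pmod{2^r}$, where $2^r$ is the order of $z$) allows us to realise $z$ itself as $|p|$ disjoint copies of a representative of $kz$ (with orientation flipped if $p < 0$).

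The principal technical point is verifying that the retraction $r\colon N \to P$ is injective on $H^{13}(\,\cdot\,;\Z)$, so that the universal obstruction survives in $N$. This follows from the long exact sequence of $(\bar\nu, N)$ combined with Poincar\'e--Lefschetz duality $H^k(\bar\nu, N;\Z) \cong H_{28-k}(\bar\nu;\Z) = H_{28-k}(P;\Z)$: since $\dim P = 13$, both $H^{13}(\bar\nu, N) \cong H_{15}(P) = 0$ and $H^{14}(\bar\nu, N) \cong H_{14}(P) = 0$, so $r^*$ is in fact an isomorphism on $H^{13}$.
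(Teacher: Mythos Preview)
Your proof is correct and follows essentially the same strategy as the paper: thicken a skeleton of $K(\Z_2,3)$ in Euclidean space, pull back $\iota_3$ to obtain a class whose integral Bockstein is a $2$-torsion codimension-$4$ class obstructed from being immersed by $\beta_2 Sq^6 Sq^2 Sq^1$, and use that $2$-torsion classes are Steenrod representable. The only cosmetic differences are that the paper takes the $14$-skeleton (your $13$-skeleton works equally well), handles $n>27$ by thickening directly in $\R^{n+1}$ rather than crossing with spheres, and simply cites Conner--Floyd \cite[(15.4)]{C-F} for the Steenrod representability of $2$-torsion classes rather than arguing via the $2$-local splitting of $MSO$.
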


\begin{proof}
We produce such examples using the standard technique of thickenings. We embed the $14$-skeleton $K=K(\Z_2,3)^{(14)}$ in a high-dimensional Euclidean space $\R^{n+1}$ with a regular neighbourhood $W$. This $W$ is a compact $(n{+}1)$-manifold homotopy equivalent to $K$, whose boundary is a closed smooth $n$-manifold $N:=\partial W$. By Poincar\'e--Lefschetz duality $$H^{13}(W,N)\cong H_{n-12}(W)\cong H_{n-12}(K),$$ which vanishes provided $n\ge 27$. It follows that for $n\geq 27$ the induced map on cohomology $$H^{13}(K)\cong H^{13}(W)\to H^{13}(N)$$ is injective. Thus we get a closed orientable manifold $N$ with a class $X\in H^3(N;\Z_2)$ such that $\beta_2 Sq^6Sq^2Sq^1(X) \neq 0$.  


Now set $x:=\beta_2 (X) \in H^4(N)$. The Poincar\'e dual class $z\in H_{n-4}(N)$ is not immersed, since its mod $2$ reduction is the Poincar\'e dual of $Sq^1(X)$, whose immersability is obstructed by the operation $\beta_2 Sq^6Sq^2$. However, since $z$ is $2$-torsion it is Steenrod representable, see for example \cite[(15.4)]{C-F}.
\end{proof}

The same method will produce Steenrod representable but not immersed homology classes in any codimension greater than $4$, but does not seem to produce codimension $3$ examples, as for $k=2$ there do not exist sequences $J$ of excess less than $2$ with $|J|+2$ even.

\section{Representing immersions and their double points}\label{S:Whitney}
In this section we use Whitney's self-intersection formula \cite{Whitney, Herbert} to show that for each of the immersed but not embedded homology classes in Theorems \ref{thm1} and \ref{thm2}, the homology class of the double points in the source manifold of a representing immersion gives an obstruction to embeddability. In the case of Theorem \ref{thm1}, this gives an alternative proof of the Bohr--Hanke--Kotschick result \cite{BHK}, that each generator $z \in H_7(\symp)$ is not embedded.
We also describe formal immersions representing these homology classes.

\subsection{Whitney's self-intersection formula}
Let $f\colon M^{n-k}\imm N$ be a self-transverse immersion of closed manifolds. The multiple-point set $\{x\in M \mid |f^{-1}f(x)|>1\}\subseteq M$ is then the image of an immersion $\mu_2(f)\colon\Delta_2(f)\imm M$, where
\[
\Delta_2(f):=\{(x,y)\in M\times M \mid x\neq y, f(x)=f(y)\},
\qquad \text{and} \qquad
\mu_2(f)(x,y) := x.
\]
To see this, let $F(M,2) := \{(x,y)\in M\times M \mid x\neq y\}$ be the ordered configuration space, and form the diagram below in which the square is a transverse pullback:
\[
\xymatrix{
\Delta_2(f) \ar[r]^-{i} \ar[d]_{\psi_2(f)} & F(M,2) \ar[r]^-{\mathrm{pr}_1} \ar[d]^{(f\times f) |_{F(M,2)}} & M \\
N \ar[r]^-{\Delta_N} & N\times N & 
}
\]
Although $F(M,2)$ is an open $(2n{-}2k)$-manifold, compactness guarantees that $\Delta_2(f)$ is a closed $(n{-}2k)$-submanifold. The composition along the top is $\mu_2(f)$; it is an immersion since $f\circ \mu_2(f)=\psi_2(f)$ is an immersion (as the transverse pullback of an immersion). The normal bundle of $\mu_2(f)$ satisfies 
\[
\mu_2(f)^*\nu_f\oplus \nu_{\mu_2(f)}\cong \nu_{\psi_2(f)}\cong i^*\nu_{(f\times f) |_{F(M,2)}}.
\]
It follows that $\nu_{\mu_2(f)}$ is orientable if $\nu_f$ is orientable.

Under our assumption that $M$ and $N$ are orientable, the above considerations show that $\mu_2(f)\colon\Delta_2(f)\imm M$ represents a homology class $\mu_2(f)_*[\Delta_2(f)]\in H_{n-2k}(M)$, whose Poincar\'e dual cohomology class we denote by $m_2(f)\in H^{k}(M)$.

\begin{prop}[Whitney {\cite{Whitney, Herbert}}] \label{p:Wh}
Let $x\in H^k(N)$ be the Poincar\'e dual of the homology class represented by $f\colon M^{n-k}\imm N^n$. Then 
\[
f^*(x) = e(\nu_f) + m_2(f)\in H^k(M),
\]
where $e(\nu_f)$ denotes the Euler class of the normal bundle of $f$.
\qed
\end{prop}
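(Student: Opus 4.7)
My plan is to apply the excess-intersection formula to the pullback square
\[
\begin{tikzcd}
M \times_N M \arrow[r, "\pi_2"] \arrow[d, "\pi_1"'] & M \arrow[d, "f"] \\
M \arrow[r, "f"'] & N.
\end{tikzcd}
\]
Writing $x = PD(f_*[M])$ as the umkehr class $f_!(1) \in H^k(N)$, this formula expresses $f^*f_!(1)$ as a pushforward along $\pi_1$ of an Euler-class correction coming from the failure of the square to be transverse.

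The fibre product decomposes as $M \times_N M = \Delta M \sqcup \Delta_2(f)$. On the off-diagonal component, self-transversality of $f$ makes the square transverse, so there is no excess; since $\pi_1|_{\Delta_2(f)} = \mu_2(f)$, the formula contributes $(\mu_2(f))_!(1) = m_2(f)$. On the diagonal component, $\Delta M$ has dimension $n-k$ while the transverse/expected dimension is $n-2k$, yielding an excess of $k$; a direct computation of tangent spaces identifies the rank-$k$ excess bundle with $\nu_f$ under $\Delta M \cong M$. Since $\pi_1|_{\Delta M}$ is the identity of $M$, the diagonal contribution is $(\pi_1|_{\Delta M})_!(e(\nu_f)) = e(\nu_f)$, and summing yields $f^*(x) = e(\nu_f) + m_2(f)$.

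The main technical obstacle will be to rigorously justify the excess-intersection formula in this purely topological setting, where the fibre product is a disjoint union of two components of different dimensions. I would do this by extending $f$ to a codimension-zero tubular immersion $\bar f \colon D(\nu_f) \to N$, which remains self-transverse for small disk radius, then computing $\bar f^*(x)$ via a Mayer--Vietoris decomposition with one Thom-class summand per local sheet of $f(M)$ in $N$, and finally restricting along the zero section $s \colon M \hookrightarrow D(\nu_f)$. The self-sheet Thom class pulls back to the Thom class $\tau$ of $\nu_f$, whose restriction $s^*\tau = e(\nu_f)$ is the standard identification of Thom class with Euler class on the zero section; the ``other-sheet'' Thom classes assemble into a class $\sigma$ whose restriction $s^*\sigma = m_2(f)$, because the zero section meets the other sheets of $\bar f$ transversely in precisely $\mu_2(f)(\Delta_2(f)) \subset M$ with its given orientation. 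Care will be needed to verify this last orientation-matching assertion, ensuring no spurious factor arises from the two orderings of each double-point pair.
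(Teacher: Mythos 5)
The paper gives no proof of this proposition at all: it is imported verbatim from Whitney and Herbert (note the statement is closed with a q.e.d.\ box and the citation \cite{Whitney, Herbert}), so there is no internal argument to compare yours against. What you propose is essentially the standard Herbert-style proof: the clean/excess-intersection heuristic for the square with $M\times_N M=\Delta M\sqcup\Delta_2(f)$, made rigorous by passing to the normal disk bundle $D(\nu_f)$, writing the dual class as a sum of local Thom classes over the sheets of the codimension-zero immersion $\bar f$, and restricting to the zero section, where the self-sheet term gives $e(\nu_f)$ and the other-sheet terms give $m_2(f)$. That is the right architecture, and the decomposition of the fibre product is legitimate here because compactness of $M$ together with local injectivity of the immersion keeps $\Delta_2(f)$ away from the diagonal (this is exactly why the paper can assert $\Delta_2(f)$ is a closed submanifold).

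Two points need more than the care you flag. First, the heart of the matter is the identity $x=\bar f_!(\tau)$, i.e.\ that $PD(f_*[M])$ is represented near $f(M)$ by one Thom class per local sheet: since $\bar f$ is an immersion and not an embedding, there is no map of pairs inducing this directly, and you must actually construct the sheet-wise umkehr (e.g.\ on compactly supported cohomology, decomposing a compactly supported representative into pieces on which $\bar f$ embeds and pushing forward by extension by zero), check independence of the decomposition, and verify compatibility with $f_*[M]$; your ``Mayer--Vietoris with one Thom-class summand per sheet'' is the right idea but is where all the real work sits. Second, the double-point locus in $M$ is only the \emph{image} of the immersion $\mu_2(f)$, and when $f$ has triple points this image is not an embedded submanifold; so the conclusion $s^*\sigma=m_2(f)$ must be phrased as ``the other-sheet classes restrict to $(\mu_2(f))_!(1)=PD_M\bigl(\mu_2(f)_*[\Delta_2(f)]\bigr)$,'' not as the dual of a subset ``with its given orientation.'' Together with fixing the orientation conventions you already mention (the ordered double-point manifold maps to $M$ by the first coordinate, so no factor of two arises), this would give a complete proof; the authors simply chose to cite \cite{Herbert} instead.
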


\begin{thm}\label{t:doublesymp}
Let $f:M^7\imm\symp$ be any self-transverse immersion representing a generator $z\in H_7(\symp)$. 
Then $0\neq m_2(f)\in H^3(M)$. In particular, $z$ is not embedded.
\end{thm}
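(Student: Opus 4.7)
The plan is to refine the Bohr--Hanke--Kotschick Steenrod-power argument using Whitney's formula, so that it yields the sharper conclusion $m_2(f)\neq 0$ rather than merely the non-existence of an embedded representative for $z$.

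First I would argue by contradiction, assuming $m_2(f)=0$. By \propref{p:Wh} this forces
\[
f^*(x) = e(\nu_f) \in H^3(M),
\]
where $\nu_f$ is the rank $3$ oriented normal bundle of $f$. Since the universal Euler class $e \in H^3(BSO(3);\Z)$ is $2$-torsion (it equals $\beta w_2$ under the integral Bockstein), its mod $3$ reduction vanishes in $H^3(BSO(3);\Z_3)$. By naturality of Euler classes, $\rho_3 f^*(x) = \rho_3 e(\nu_f) = 0$, and hence $P^1_3 f^*(x) = 0$ in $H^7(M;\Z_3)$.

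Next I would evaluate $x \cup P^1_3 x \in H^{10}(\symp;\Z_3)$ on $[\symp]$ through the map $f$. Since $z = f_*[M]$ is Poincar\'e dual to $x$, naturality of the Kronecker pairing yields
\[
\langle x \cup P^1_3 x,\,[\symp]\rangle = \langle P^1_3 x,\, z\rangle = \langle f^*(P^1_3 x),\,[M]\rangle = \langle P^1_3 f^*(x),\,[M]\rangle,
\]
and the right-hand side vanishes by the previous step. On the other hand, the Bohr--Hanke--Kotschick computation recalled in \secref{S:proofA} shows the left-hand side is the generator of $H^{10}(\symp;\Z_3)\cong\Z_3$, giving a contradiction. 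The second assertion of the theorem follows immediately, since any embedding has empty double-point set and hence $m_2(f)=0$, contradicting what has just been proved.

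The main obstacle is essentially conceptual rather than technical: one must recognise that the Bohr--Hanke--Kotschick obstruction $x \cup P^1_3 x$ really depends only on the pullback $f^*(x)$, and that Whitney's formula forces this pullback to be mod $3$-trivial as soon as the double-point class vanishes, because the Euler class of a rank $3$ oriented bundle dies mod $3$. Once these two facts are paired, the verification is a short calculation.
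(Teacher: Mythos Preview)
Your proof is correct and follows essentially the same route as the paper's. Both arguments hinge on the fact that the Euler class of an oriented rank~$3$ bundle is $2$-torsion (you phrase this as $e=\beta w_2$, the paper as $e(\nu_f)=-e(\nu_f)$ for odd rank), hence dies mod~$3$; combined with Whitney's formula and the same Kronecker-pairing computation pushing the BHK obstruction $x\cup P^1_3 x$ down to $M$, this forces $m_2(f)\neq 0$. The only cosmetic difference is that you argue by contradiction while the paper proceeds directly by multiplying Whitney's identity by~$2$.
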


\begin{proof}
Let $x\in H^3(\symp)$ be the Poincar\'e dual of $z=f_*[M]$, and $x_3:=\rho_3(x)\in H^3(\symp;\Z_3)$ its mod $3$ reduction. As we have seen, $x_3\cup P^1_3(x_3)\in H^{10}(\symp;\Z_3)$ is nonzero. Therefore
\begin{align*}
0 \neq \langle x_3\cup P^1_3(x_3),[\symp]\rangle & = \langle P^1_3(x_3), x\cap[\symp]\rangle \\
& = \langle P^1_3(x_3),f_*[M]\rangle\\
& = \langle f^* P^1_3(x_3),[M]\rangle,
\end{align*}
which implies that $f^* P^1_3(x_3)=P^1_3(f^*(x_3))\neq 0$, and therefore $0\neq f^*(x_3)\in H^3(M;\Z_3)$. It follows that $2f^*(x_3)\neq 0$. 

Now by Whitney's formula
\[
2f^*(x) = 2e(\nu_f) + 2m_2(f),
\]
and since the rank of $\nu_f$ is odd, $e(\nu_f)=-e(\nu_f)$ and so $2e(\nu_f)=0$. Reducing the above mod $3$ we therefore get that 
\[
0\neq 2f^*(x_3)=\rho_3(2m_2(f)),
\]
which entails that $m_2(f)\neq 0$.
\end{proof} 


\begin{thm}\label{t:doubleN}
Let $N$ be a $24$-manifold as in Theorem~\ref{thm2}
and let $f:M^{13} \imm N$ be any self-transverse immersion representing a generator $z \in H_{13}(N)$. 
Then $0 \neq m_2(f)\in H^{11}(M)$. In particular, $z$ is not embedded. \qed
\end{thm}

\begin{proof}
With $x_2\in H^{11}(N;\Z_2)$ the mod $2$ reduction of the Poincar\'e dual of $z$, we have 
\[
0\neq  \langle x_2 Sq^2(x_2),[N] \rangle = \langle Sq^2(x_2),f_*[M]\rangle = \langle Sq^2(f^*(x_2)),[M]\rangle,
\]
and so $Sq^2(f^*(x_2))\neq 0$. Applying $Sq^2\circ\rho_2$ to Whitney's formula gives
\[
0\neq Sq^2(f^*(x_2)) = Sq^2(w_{11}(\nu_f)) + Sq^2\rho_2(m_2(f)).
\]
Using that $N$ is stably parallelizable, 
we deduce that $\nu_f$ represents the stable normal bundle $\nu_M$ of $M$. 
Then since $M$ is orientable, Wu's formula gives
\[
Sq^2(w_{11}(\nu_f))=Sq^2(w_{11}(\nu_M))=w_2(M)w_{11}(\nu_M).
\]
%
By a theorem of Massey and Peterson, \cite[Theorem I(i)]{MP}, $w_{11}(\nu_M) = 0$ for any closed $13$-manifold $M$. Thus $Sq^2\rho_2(m_2(f))\neq 0$, and $m_2(f)\neq 0$ as claimed.
\end{proof}

\begin{rem}
For the examples in Theorem~\ref{thm2}, any representing immersion $f \colon M \imm N$ has non-trivial Hatcher-Quinn invariant \cite[Theorem 2.3]{HQ}, since it is not regularly homotopic to an embedding.
It is natural to ask about the relationship between the homology class $PD_M m_2(f) \in H_{n-2k}(M)$ and
the Hatcher-Quinn invariant of $f$, but we leave that question for future work.
\end{rem}

\subsection{Some explicit formal immersions}
In this subsection we present explicit formal immersions representing the generators of $H_7(\symp)$ and $H_{13}(N)$.
By Hirsch-Smale theory \cite{Hirsch}, these formal immersions are homotopic to immersions.

Recall from the proof of Proposition \ref{Sp2} that the $7$-skeleton $K$ of $\symp$ is the Thom space of a rank $3$ vector bundle $\xi\to S^4$. Let $M=S(\xi\oplus\eps)$ be the sphere bundle of the stabilisation of $\xi$; it is the total space of an $S^3$-bundle over $S^4$. Let $s\colon S^4\to M$ be one of the obvious sections. Then the quotient $M/s(S^4)$ is identified up to homotopy with the Thom space of $\xi$, and under this identification the zero section of $\xi$ corresponds to the image of the other obvious section $-s \colon S^4\to M$. We now define $f'\colon M\to \symp$ to be the composition
\[
M \to M/s(S^4) \simeq K \subseteq \symp.
\]
By construction, $f'_*[M]$ generates $H_7(\symp)\cong H_7(K)$. 

We now use Hirsch-Smale theory 
\cite{Hirsch} to find an immersion $f\colon M \imm \symp$ homotopic to $f'$. We only need show that 
$f'\colon M \to \symp$ is covered by a formal immersion, in other words that there is a fibrewise linear inclusion
$TM \to  (f')^*T\symp$. Since $\symp$ is a Lie group and therefore parallelizable, this is equivalent to
finding an embedding $TM \to  \eps^{10}$, which by Hirsch-Smale again is equivalent to $M$ immersing into $\R^{10}$. Now we appeal to \cite[Theorem 4.1]{Wilkens}, which asserts that any $2$-connected $7$-manifold immerses in $\R^{10}$. 

We can give an alternative proof that $m_2(f)\neq 0$ for this immersion. Observe that $M$ being $2$-connected, the Hurewicz homomorphism $\pi_3(M)\to H_3(M)$ is onto, and any homology class $y\in H_3(M)$ may be written as $y=\alpha_*[S^3]$. Then
\[
\langle e(\nu_f), y\rangle = \la e(\nu_f),\alpha_*[S^3] \ra = \la\alpha^* e(\nu_f),[S^3] \ra = \la e(\alpha^*\nu_f),[S^3]\ra = 0,
\]
where the last equality follows since $\pi_2(SO(3))=0$ implies that any oriented rank $3$ vector bundle over $S^3$ is trivial. Hence $e(\nu_f)=0$ (clearly, the same is true of any immersion $f\colon M\imm\symp$). We also note that $f^*(x)\neq 0$, where $x\in H^3(\symp)\cong H_7(\symp)$ is a generator. This is because $x|_K\in H^3(K)$ is a Thom class, which is represented by the zero section $S^4\to (D\xi,S\xi)$, and therefore by construction $f^*(x)$ is represented by the section $-s\colon S^4\to M$. By the injectivity of the induced map $(-s)_* \colon H_4(S^4)\to H_4(M)$, this class is nonzero. Applying Whitney's formula now gives $m_2(f)\neq 0$.

In conclusion, the immersion $f\colon M\imm \symp$ represents the generator $z\in H_7(\symp)$, and the multiple-points of $f$ are represented by the image of $S^4$ in $M$ under a section, thus are non-homologous to zero.

Similar arguments apply to represent the class $z\in H_{13}(N)$ appearing in Theorem \ref{thm2}. We define $M := S(\zeta\oplus\eps)$, where $\zeta$ is the rank $11$ bundle over $S^2$ appearing in the proof of Theorem \ref{t:24ibne}. Since $N$ is stably parallelizable, it is almost parallelizable, and the analogous map $f' \colon M \to N$ misses a point. Therefore $f'$ can be covered by a formal immersion if $M$ immerses in $\R^{24}$. Here $\zeta \oplus \eps \to S^2$ is a rank $12$ vector bundle; let $\xi\to S^2$ be a stable inverse to $\zeta\oplus\eps$, then since $\pi_1(SO(2))\to \pi_1(SO)$ is onto we may choose $\xi$ to be a rank $2$ bundle.  It follows that $M = S(\zeta\oplus\eps)$ embeds in $S^2\times \R^{14}$, hence in $\R^{17}$.

As above, we can show that $f^*(x)\neq 0$ where $x\in H^{11}(N)$ is a generator, and that $e(\nu_f)=0$ since the Euler class of any rank $11$ bundle over $S^{11}$ is trivial. It follows that $m_2(f)\neq 0$. Here the double points are represented by a section $S^2\to V$.

\end{document}